\numberwithin{equation}{section}
\newcommand{\cT}{\mathcal T}
\newcommand{\cH}{\mathcal H}
\newcommand{\cE}{\mathcal E}
\newcommand{\cV}{\mathcal V}
\newcommand{\cI}{\mathcal I}
\newcommand{\cS}{\mathcal S}
\newcommand{\bbR}{\mathbb{R}}
\newcommand{\bbZ}{\mathbb{Z}}
\newcommand{\bbC}{\mathbb{C}}
\newcommand{\bbP}{\mathbb{P}}
\renewcommand{\P}{\mathbb{P}}
\newcommand{\E}{\mathbb{E}}
\newcommand{\ve}{\varepsilon}
\renewcommand{\d}{ {\sharp \delta} }
\newcommand{\im}{i_{\min}}
\newcommand{\iM}{i_{\max}}
  \crefname{theorem}{Theorem}{Theorems}
  \crefname{lemma}{Lemma}{Lemmas}
  \crefname{remark}{Remark}{Remarks}
  \crefname{proposition}{Proposition}{Propositions}
\crefname{notation}{Notation}{Notations}
\crefname{claim}{Claim}{Claims}
  \crefname{definition}{Definition}{Definitions}
  \crefname{corollary}{Corollary}{Corollaries}
  \crefname{section}{Section}{Sections}
  \crefname{figure}{Figure}{Figures}
    \crefname{assumption}{Assumption}{Assumptions}
\newtheorem{theorem}{Theorem}
\newtheorem{remark}[theorem]{Remark}
\newtheorem{corollary}[theorem]{Corollary}
\newtheorem{assumption}[theorem]{Assumption}
\newtheorem{claim}[theorem]{Claim}
\newtheorem{lemma}[theorem]{Lemma}
\newtheorem{definition}[theorem]{Definition}
\newtheorem{proposition}[theorem]{Proposition}
\author{Benoît Laslier}
\title[Local limits are stable under bounded boundary perturbations]{Local limits of lozenge tilings are stable under bounded boundary height perturbations}
\date{}
\begin{document}

\begin{abstract}
We show that bounded changes to the boundary of a lozenge tilings do not affect the local behaviour inside the domain. As a consequence we prove the existence of a local limit in all domains with planar boundary. The proof does not rely on any exact solvability of the model beyond its links with uniform spanning trees.
\end{abstract}

\maketitle

\section{Introduction}


%
%

Random tilings of regions of the plane by lozenges have been studied in combinatorics and probability for a long time and are still an active domain of study. 
Indeed it is a reasonably simple and natural model for random surfaces embedded in $\bbR^3$ which has links with various fields of mathematics, see for example \cite{Tiliere2014} and references therein. 

In this paper we will be interested by the local behaviour of the model. More precisely, we will consider sequences of domains $D_n$, tiled by lozenges of fixed size, and we will try to understand the behaviour at this fixed local scale when the domains $D_n$ grow to infinity. Our purpose will be to show a form of decoupling result, meaning that the details of the boundary of $D_n$ (far away) do not affect the local behaviour around a fixed point.
There is however a subtlety when speaking of the details of the boundary. In this paper and in general for lozenge tilings, it is not very relevant to see the boundary as a curve in the plane; one has instead to think of a tiling as describing a surface and therefore of the boundary as a curve in $\bbZ^3$. This interpretation is the point of view we adopt when speaking of bounded perturbations.
\begin{theorem}\label{thm:intro}
Let $D_n$, $D'_n$ be two sequences of domains that exhaust the plane. Let $\gamma_n$ and $\gamma'_n$ be the corresponding boundary curves seen as embedded in $\bbZ^3$. Assume that the uniform lozenge tilings measure on $D_n$ converges locally around $0$ to a translation invariant, ergodic, non frozen, measure $\mu$. If there is a $C$ such that for all $n$, $d_\mathcal{H}(\gamma_n, \gamma'_n) \leq C$, then the uniform lozenge tilings measure on $D'_n$ also converges locally around $0$ to $\mu$.
\end{theorem}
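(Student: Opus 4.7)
The plan is to build a coupling of the uniform lozenge tilings on $D_n$ and $D'_n$ under which, for every fixed radius $R$, the two tilings coincide on the ball $B_R(0)$ with probability tending to $1$. Local convergence of a sequence of measures is equivalent to convergence of its finite-window marginals, so any such coupling forces both sequences to share the same local limit $\mu$.

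\textbf{Step 1 (Monotone sandwich).} From $d_\mathcal{H}(\gamma_n,\gamma'_n)\le C$ one can construct reference domains $D_n^{\pm}$ whose boundary height functions dominate, respectively are dominated by, those of both $D_n$ and $D'_n$ --- essentially by shifting $\gamma_n$ by $\pm C$ inside $\mathbb{Z}^3$. Stochastic monotonicity of the uniform tiling measure in its boundary conditions (a classical FKG-type property of the bipartite dimer height function) then yields a monotone coupling with $h_n^-\le h_n,\; h'_n\le h_n^+$ everywhere. It therefore suffices to show that $h_n^+-h_n^-$ is constant on $B_R(0)$ with probability tending to $1$.

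\textbf{Step 2 (Spanning-tree coupling).} Translate the comparison of $h_n^+$ and $h_n^-$ into a comparison of the two associated uniform spanning trees, via the bijection mentioned in the abstract. Sample both trees by Wilson's algorithm from a common set of starting vertices, chosen so that the revealed edges determine the tilings in a neighborhood of $B_R(0)$. Since the two tree models live on graphs which agree except inside an $O(C)$-size zone $\Gamma_n$ near $\gamma_n$, the loop-erased random walk branches are identical in the two models until they first enter $\Gamma_n$. On the complementary event the two trees coincide on $B_R(0)$, and hence $h_n^+-h_n^-$ is constant there.

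\textbf{Step 3 (Harmonic measure) and main obstacle.} The remaining quantitative task is to bound the probability that any of the finitely many generated LERW branches ever hits the bounded set $\Gamma_n$, which sits at planar distance tending to infinity from $0$. A standard planar harmonic-measure estimate gives a bound of order $1/\log R$ for the hitting probability of an $O(1)$ set seen from distance $R$, which vanishes, and a union bound over the finitely many branches concludes. I expect the main difficulty to lie not in this estimate but in the passage in Step 2 from ``spanning trees agree'' to ``heights agree'' when $D_n$ and $D'_n$ have genuinely different planar shapes. This is where the hypothesis that $\mu$ is non-frozen should enter: in a frozen regime a single aberrant branch can propagate a rigid defect across the whole domain, so the local-coincidence-of-branches mechanism would fail. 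Non-frozenness, together with the ergodicity and translation invariance of $\mu$, is precisely what allows a bounded boundary perturbation to be absorbed locally, and checking that the harmonic-measure bound is uniform over the admissible shapes of $D_n$ is where I expect most of the remaining technical work to live.
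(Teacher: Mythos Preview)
Your Step~3 contains a fatal error. The region $\Gamma_n$ where the two graphs differ is not an $O(1)$ set: it is an annulus of thickness $O(C)$ running along the \emph{entire} boundary of $D_n$, so its diameter and length go to infinity with $n$. Every branch sampled by Wilson's algorithm from a vertex near $0$ is a path to the wired boundary and therefore enters $\Gamma_n$ with probability $1$; the ``$1/\log R$ hitting probability of a bounded set'' estimate is simply inapplicable. Your Step~2 has a related problem you partly flag yourself: even if the primal trees agreed on $B_R(0)$, the dimers are read off from the \emph{oriented} (dual) tree, and the orientation of an edge near $0$ depends on the full branch to the boundary, not just on the local tree structure. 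So ``trees agree locally'' does not give ``dimers agree locally'' without further input, and you have not supplied any.

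The paper's argument is organised quite differently. It does not attempt to couple the two spanning trees directly. Instead it superimposes independent samples $M$ on $D_n$ and $M'$ on $D'_n$, observes that the resulting double-dimer configuration decomposes into loops and into paths joining boundary vertices, and builds a third configuration $M''\overset{d}{=}M'$ by re-orienting all loops toward $M$. Then $M''$ and $M$ disagree only along the paths. Paths are level lines of $h_{M'}-h_M$ at height $O(C)$, whereas the key analytic input (the ``spread-out property'') says that, conditionally on the configuration outside a large ball, $h_{M'}(0)-h_M(0)$ is anti-concentrated and hence typically much larger than $C$; consequently no path can come near $0$. The spanning-tree/Wilson machinery is used only to prove this spread-out property for the limit measure $\mu$ (via a winding/local-CLT argument for a single branch), not to couple the two tilings. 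In short, the mechanism by which the boundary perturbation is absorbed is height anti-concentration at the centre, not smallness of harmonic measure at the boundary.
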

See also \cref{thm:robustness} and \cref{prop:identification} for a more precise formulation and a more general result.

An interesting corollary of this result (together with the local limit of trees in T-graphs and the identification of T-graphs measures in \cref{prop:identification}) is a local limit for arbitrary planar boundary condition.
\begin{corollary}
Let $D_n$ be a growing sequence of domains and let $\gamma_n$ be the corresponding boundary curves. If there exists a plane $P$ such that $\gamma_n$ stays at bounded distance from $P$ and if the normal vector to $P$ has non-zero coordinates, then the uniform lozenge tilings measure of $D_n$ converges locally to the  (unique) translation invariant ergodic Gibbs measure associated with $P$.
\end{corollary}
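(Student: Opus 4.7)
The plan is to reduce to \cref{thm:intro} by exhibiting, for each plane $P$ with non-zero-coordinate normal, a reference sequence of domains $D'_n$ whose boundary curves $\gamma'_n$ stay at bounded distance from $P$ in $\bbZ^3$ and for which the local limit of the uniform lozenge tiling measure exists and is explicitly identified as the translation invariant ergodic Gibbs measure $\mu_P$ associated with $P$. The parenthetical comment preceding the corollary points to exactly the tools that provide such a sequence: the local convergence of the uniform spanning tree on T-graphs, combined with the identification of T-graph measures given by \cref{prop:identification}.

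Concretely, for the given plane $P$ I would take $D'_n$ to be large, combinatorially natural domains (for instance suitably scaled hexagonal-type regions whose side lengths reflect the slope of $P$) whose boundary height function agrees, up to a bounded additive error, with the affine function on $\bbZ^2$ whose graph is $P$. The T-graph construction yields a sequence of uniform spanning trees whose local limit is known, and \cref{prop:identification} identifies the associated tiling local limit as $\mu_P$. The hypothesis that the normal to $P$ has non-zero coordinates is exactly what places this slope strictly inside the interior of the Newton polygon of the lozenge model, hence guarantees that $\mu_P$ is non-frozen; translation invariance and ergodicity are likewise part of what is asserted by \cref{prop:identification}. So the reference sequence $D'_n$ satisfies all the hypotheses required to play the role of $D_n$ in \cref{thm:intro}.

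With such a reference sequence fixed, the corollary is then immediate. Since both $\gamma_n$ and $\gamma'_n$ lie within a bounded Hausdorff distance of $P$, the triangle inequality for $d_{\mathcal{H}}$ provides a constant $C'$ such that $d_{\mathcal{H}}(\gamma_n, \gamma'_n) \leq C'$ for every $n$. Invoking \cref{thm:intro} (or, if needed, its more general formulation \cref{thm:robustness}) then transfers the local convergence from $D'_n$ to the arbitrary sequence $D_n$, with the same limit $\mu_P$.

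The hard part is therefore not the reduction itself, which is essentially the triangle inequality together with \cref{thm:intro}, but the two supporting inputs: the existence of a local limit for uniform spanning trees on the appropriate T-graphs, and the identification \cref{prop:identification} of the resulting lozenge measure with the Gibbs measure $\mu_P$. A minor point to watch during the reduction is that the non-zero-coordinates assumption on the normal is genuinely used to verify the non-frozenness hypothesis of \cref{thm:intro}; frozen slopes correspond to degenerate normals and lie outside the scope of our robustness result.
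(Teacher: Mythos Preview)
Your overall strategy is exactly what the paper intends: build a reference sequence $D'_n$ from finite T-graph domains, use the known local limit of wired spanning trees together with \cref{prop:identification} to identify the limit as $\mu_P$, and then transfer to the given $D_n$ via \cref{thm:intro}/\cref{thm:robustness}. The use of the non-zero-coordinate hypothesis to ensure non-frozenness is also correct.

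There is, however, a genuine gap in the transfer step. Your sentence ``since both $\gamma_n$ and $\gamma'_n$ lie within a bounded Hausdorff distance of $P$, the triangle inequality for $d_{\mathcal H}$ provides a constant $C'$ such that $d_{\mathcal H}(\gamma_n,\gamma'_n)\le C'$'' is false as stated. The hypothesis only says that each point of $\gamma_n$ (resp.\ $\gamma'_n$) is within bounded distance of the two-dimensional plane $P$; this is a one-sided condition (the full Hausdorff distance to $P$ is infinite, since $P$ is unbounded). Two curves can both lie in a slab around $P$ yet be arbitrarily far apart in $\bbZ^3$: take for instance boundary curves of balls of radii $n$ and $2n$ with exactly planar heights. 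In particular, if you pick a \emph{fixed} combinatorial shape for $D'_n$ (your ``hexagonal-type regions''), there is no reason for $d_{\mathcal H}(\gamma_n,\gamma'_n)$ to stay bounded, and the hypothesis of \cref{thm:intro} fails. Equivalently, in \cref{thm:robustness} the points of $\partial(D_n\cap D'_n)$ that lie on $\partial D_n$ but in the interior of $D'_n$ have $h_{M'}$ unconstrained.

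The fix is to choose $D'_n$ adaptively so that its \emph{planar} region coincides (up to $O(1)$) with that of $D_n$. This is exactly what the T-graph construction allows: for any simply connected region one can take the corresponding finite piece of the T-graph, and the associated dimer domain has that planar shape with boundary heights within $O(1)$ of $P$. With this choice $\gamma_n$ and $\gamma'_n$ have essentially the same planar projection and both heights are close to $P$, so $d_{\mathcal H}(\gamma_n,\gamma'_n)$ is genuinely bounded and \cref{thm:intro} applies. The local limit of the wired UST on these finite T-graph pieces (from \cite{BLR16}) together with \cref{prop:identification} then gives the required convergence of the reference sequence to $\mu_P$.
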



To see the interest of such a result, it is useful to discuss the solvability of the model and its dependence on boundary conditions. Uniform lozenge tilings is an exactly solvable model, meaning that many quantities in the model have exact expressions.  Most notably, the number of tilings of a region of area $n$ is the determinant of an $n\times n$ adjacency matrix \cite{Kasteleyn1961}, and lozenge tilings can also be interpreted as interlacing particle arrays related to Schur processes.
 Actually, the existence of such formulas is one of the reasons the model is appealing since they can allow for a very fine understanding of its property at small and large scales. Furthermore some of the most surprising connections to other parts of mathematics appear through these exact formulas.

Yet maybe the main issue when studying dimers through solvability is that it is extremely sensible to boundary conditions, to the point that adding a single defect can break the analysis completely. To illustrate the kind of difficulty that can appear, in the related case of domino tilings more than 15 years were necessary to go from so called Temperley's boundary conditions \cite{Kenyon2007c} to Temperley's conditions with finitely many defects \cite{Russkikh2016}.
To the best of our knowledge, local limits for lozenge tilings have only been derived in two sets of cases \cite{Gorin2016,Petrov2014}, both including some perfectly straight lines on the boundary, with techniques related to the Schur representation of the problem.

 \medskip
 
The rest of the paper is organised as follows. In \cref{sec:previous}, we recall briefly the well known results about dimers and uniform spanning trees that are needed for the later sections, emphasising brevity over completeness as most reader should be familiar with the content. Section \ref{sec:robustness} contains the main argument of the paper, mostly showing that \cref{thm:intro} follows as soon as the limit measure satisfies a certain ``lack of concentration'' property. Finally in \cref{sec:spread}, we show that lozenge tilings measures indeed satisfy this property, which is the most involved part of the paper and where we  use the relation with uniform spanning trees.

\section{Previous results}\label{sec:previous}

In this section, we briefly recall some results on dimers and spanning trees needed for the main argument. For more details and precision, the reader can refer to \cite{Tiliere2014} for general facts on dimers, \cite{Kenyon1} for the generalised Temperley's bijection, and \cite{BLRannex} for the bijection with T-graphs.

\subsection{Dimer model}\label{sec:dimers}

Fix a weighted, undirected graph $G$, either finite or infinite. We call ``dimer configuration'' $M$ a subset of edges of $G$ such that each vertex is the endpoint of exactly one edge in $M$, or in other word a perfect matching between vertices of $G$ (two vertices being naturally matched if they are the endpoints of an edge in $M$). Edges in $M$ are often called dimers. 
In this paper, $G$ will always be a planar bipartite graph and we will also assume that it comes with a fixed, locally finite, embedding in the plane. 

Given a set of positive weights on edges $w(e)$, we will consider as usual the measure where each dimer carries a multiplicative weight $w$, i.e. if $G$ is a finite graph then we consider the measure 
\[
\mu_G(M) \propto \prod_{e \in M} w(e).
\]
For infinite graphs, we are interested in Gibbs measures defined with finite marginals as above.
The main application will be lozenge tilings with unit weights, i.e. the case where $G$ is the infinite hexagonal lattice and $w$ is identically equal to $1$, and the reader will not lose anything by thinking only of this case.
In that case the set of translation invariant Gibbs measures is well understood.
\begin{theorem}\cite{Sheffield2005}\label{thm:sheffield}
For every $p_a, p_b, p_c > 0$ with $p_a + p_b + p_c = 1$, there exists a unique Gibbs measure of the whole plane unit weight hexagonal lattice with the following properties. First it is translation invariant and ergodic, and second the densities of the three possible dimers orientations are respectively $p_a, p_b$ and $p_c$.
\end{theorem}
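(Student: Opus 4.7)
The plan is to split the proof into two parts: \emph{existence}, by constructing explicit translation invariant ergodic Gibbs measures indexed by a magnetic field, and \emph{uniqueness}, by comparing any two ergodic Gibbs measures of the same slope via a coupling of their underlying matchings.

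For existence, I would introduce positive multiplicative weights $(a,b,c)$ on the three orientations of edges of the hexagonal lattice, and consider for each $n$ the Gibbs measure on the torus $\bbZ^2/n\bbZ^2$ with these weights. Kasteleyn theory gives a determinantal formula for finite marginals in terms of an explicit $2\times 2$ Kasteleyn matrix with Fourier variables $(z,w)$, and one can take the $n \to \infty$ limit of the inverse Kasteleyn operator to obtain an explicit double-contour integral expression for a limiting correlation kernel $K_{a,b,c}$ on the whole plane. This defines a translation invariant determinantal Gibbs measure $\mu_{a,b,c}$; ergodicity (indeed mixing) follows from the polynomial decay of correlations of the kernel. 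The densities of the three dimer types are computed by evaluating $K_{a,b,c}$ at diagonal edges, giving a smooth map $(a:b:c)\mapsto (p_a,p_b,p_c)$. Using the characteristic polynomial and the Ronkin/surface-tension formalism, or equivalently by checking that the Jacobian is non-degenerate at each interior point, one verifies that this map is a homeomorphism from the open $2$-simplex of magnetic fields (modulo global rescaling) onto the open $2$-simplex of slopes. This produces at least one measure with the required properties for every admissible $(p_a,p_b,p_c)$.

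For uniqueness, let $\mu,\mu'$ be two translation invariant ergodic Gibbs measures with the same densities, and consider the product coupling $(M,M')\sim \mu\otimes \mu'$. Their superposition $M\cup M'$ consists of doubled edges together with a union of disjoint simple cycles (possibly infinite), and the Gibbs property applied on any finite region implies that along any finite cycle we can swap the roles of $M$ and $M'$ without changing the law. Translating this back to height functions $h,h'$, the difference $h-h'$ is constant in each complementary face and jumps by $\pm 1$ across each cycle; moreover, the equality of slopes forces $h-h'$ to have sublinear growth in every direction. Ergodicity of the coupling, combined with a Burton--Keane type argument ruling out more than one infinite cluster of cycles per ergodic component, then shows that $h-h'$ must be almost surely constant, hence $M=M'$, proving $\mu=\mu'$.

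The main obstacle is the uniqueness step: one must control the infinite components of $M\cup M'$, because the swapping argument is purely local and only gives a free modification along \emph{finite} cycles. Sheffield's original strategy circumvents this by first constructing a monotone coupling in the height-function partial order and reducing uniqueness to the case of the maximal and minimal ergodic measures of a given slope, after which a clever ergodic-theoretic accounting of level lines of $h-h'$ forces their density to be zero. Handling this, rather than the determinantal existence half, is where the bulk of the technical effort lies.
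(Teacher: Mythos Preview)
The paper does not prove this theorem at all: it is stated with the citation \cite{Sheffield2005} as a background result and is used as a black box later (in \cref{prop:identification}). There is therefore no ``paper's own proof'' to compare your proposal against.

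For what it is worth, your outline is a reasonable high-level summary of how this result is actually established in the literature: existence via Kasteleyn theory on the torus and a limiting inverse operator (this is essentially the Cohn--Kenyon--Propp / Kenyon--Okounkov--Sheffield approach), and uniqueness via Sheffield's height-function coupling arguments. You correctly identify that the genuine difficulty is controlling infinite level lines of $h-h'$ in the uniqueness step, and that Sheffield's monotone coupling (``cluster swapping'') machinery is what handles this. However, your sketch remains well short of a proof: the Burton--Keane style statement you invoke does not apply directly to double-dimer level lines, and the passage from ``sublinear growth of $h-h'$'' to ``$h-h'$ constant'' is exactly the hard part that requires Sheffield's full argument, not a one-line ergodic accounting. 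If you intend to actually prove the statement rather than cite it, those are the places that need substantial work.
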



In our planar bipartite setting, it is also well known that a dimer configuration $M$ can be equivalently described by its \emph{height function} $h_M$, which is a function from the set of faces of $G$ to $\bbZ$ (or, depending on some convention, for any face $f$ there is a constant $h_0(f)$ independent of $M$ such that $h_M(f)$ takes value in $\bbZ + h_0(f)$ ).
This height function is almost Lipschitz : for any graph $G$, there exists a constant $C$ such that,
\begin{equation}\label{eq:lipschitz}
\forall f, f' \in G^\dagger, \forall M, \, \, |h_M(f) - h_M(f') | \leq C d(f, f') + C.
\end{equation}

%

\medskip

Let us elaborate a bit on our setting for taking local limits. Recall that we always work  with embedded graphs for ease of notation, so we can consider any dimer configuration (or later spanning tree) as a closed subset of $\bbC$. We define a local distance around $0$ between subsets as $d ( E, F) = \exp( - \max \{R : B(0, R) \cap E = B(0,R) \cap F \})$. Local convergence (around $0$) of a dimer measure means convergence in law according to this distance. We will also be using the following equivalent formulation : A sequence of measures $\mu_n$ converges locally to $\mu$ if and only if, for any $\ve>0$ and $R>0$, for any $n$ large enough, one can find a coupling between $\mu_n$ and $\mu$ represented by random variables $M_n$ and $M$ such that $\bbP\big( M_n \cap B(0, R)  =  M \cap B(0, R) \big) \geq 1 - \ve$.

To see why this is an equivalent formulation, observe that, in one direction, the Skorokhod embedding theorem actually constructs a coupling of all $M_n$ and $M$ under which the condition just becomes convergence in probability of the distance to $0$. For the other direction, note that finding such a coupling directly controls the Levy--Prokhorov metric so it implies convergence in law.

\begin{remark}
Our definition of local convergence depends on the embedding of the graph so that the distance between two embeddings of the same configuration can be large. This could easily be fixed but we restrain from doing so for the sake of brevity and because using this convention allows for easy notations.
\end{remark}

\subsection{Double dimer model}

We call ``double dimer model configurations'' the edge configurations obtained by taking the union of two dimer configurations.

If $G$ covers the whole plane, it is easy to see that a double dimer configuration is a disjoint union of single edges, simple loops and simple doubly infinite paths (where disjoint means they cannot share a vertex) spanning all vertices of $G$.
We will also consider the case where we superimpose two configurations of slightly different finite graphs $G$ and $G'$ (where $G$ and $G'$ are both sub-graph of a bigger one so that the superimposition makes sense). Then on $G \cap G'$, the double dimer configuration is an union of simple loops, single edges and simple paths connecting points in the symmetric difference $G \Delta G'$.


We will call ``double dimer model'' the law on double dimer configurations obtained by taking the union of independent samples of the dimer model. The key remark about this model is that if $M$ and $M'$ denote the independent matchings, then the law of $M$ and $M'$ given $M \cup M'$ is very simple.
Note that there are exactly two ways to obtain any given loop. Let us call the two bipartite classes white and black. When going clockwise around a loop we can see two patterns : either edges from white to black are in $M$ and edges from black to white are in $M'$ or the other way around. We will call these two possibilities the two \emph{orientations} of the loops. It is easy to see that to recover $M$ and $M'$ from $M\cup M'$, the orientations of all loops have to be sampled with probability $1/2$ independently for each loop.
Note that (when $G$ is infinite) there are also two possible orientations for infinite paths but their law is not necessarily straightforward as we cannot apply the Gibbs property to an infinite path. In the finite case, the orientation of a path is actually determined by its extremities so the reconstruction of $M$ and $M'$ from $M \cup M'$ is possible with only iid choices.

The height function also behaves well with respect to the double dimer confi\-guration. For any $M$ and $M'$, the function $h_M - h_{M'}$ (called double dimer height function) is obtained from the double dimer as follows. Say that one orientation is positive and the other negative. When crossing a loop from the inside to the outside, $h_M - h_{M'}$ increases by one if the loop is oriented positively and decreases by one if it is oriented negatively. For paths, the same convention extends in a straightforward way. When moving without crossing any loop or path, $h_M - h_M'$ stays constant. We will sometime refer to the paths and loops in the double dimer model as level lines of the double dimer height function.

\subsection{Uniform spanning tree}

For a (possibly oriented) graph $G$ with some marked boundary vertices $\partial$, a wired uniform spanning tree is a subset of oriented edges such that every vertex in $G\setminus \partial$ has a single outgoing edge and there is no cycle. Given weights on the oriented edges $w(\vec e )$, the uniform spanning tree measure is given by 
\[
\mu( \cT ) \propto \prod_{\vec e \in \cT} w(\vec e ).
\]

It is well known that sampling from that measure can be done through Wilson algorithm, sampling loop-erased random walks in arbitrary order. Let us elaborate a bit on the loop-erasure. Given $X = (X_i)_{i = 1, \ell}$ an arbitrary random path, we can define its forward loop erasure $\overset{\rightarrow}{E} (X)$ recursively as follows. If $X$ is a self avoiding path then $\overset{\rightarrow}{E}(X)= X$, otherwise define $t = \inf \{ i \leq \ell | \exists s < i, \, X_i = X_s \}$ and let $s$ be the (unique) time less than $t$ such that $X_s = X_t$. Then we define $\overset{\rightarrow}{E} (X)=  \overset{\rightarrow}{E} (X_1, \ldots, X_s, X_{t+1}, \ldots X_l$. We also define the backward loop erasure $\overset{\leftarrow}{E} X $ as the time reversal of $\overset{\rightarrow}{E} (X_\ell, \ldots, X_1)$. A well known result about loop-erased random walk is that, despite these two operations being different, the law of the result is the same.

\begin{lemma}\label{lem:back_loop_erasure}(\cite{Lawler2012} Lemma 7.2.1])
There exists a bijection $\phi$ from the set of finite random walk paths to itself such that for all $X$
\[
 \overset{\rightarrow}{E} X = \overset{\leftarrow}{E} \phi(X).
\] 
Furthermore $X$ and $\phi(X)$ use the same set of oriented edges with the same multiplicities.
\end{lemma}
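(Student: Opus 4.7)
The plan is to construct $\phi$ through a canonical loop decomposition of walks along their loop-erasure. First I would verify the following ``last-visit'' reformulation of $\overset{\rightarrow}{E}$: for $X$ of length $\ell$, define inductively $u_0 = \max\{i : X_i = X_0\}$ and $u_{j+1} = \max\{i : X_i = X_{u_j+1}\}$ until $u_k = \ell$. One checks that $\overset{\rightarrow}{E}(X) = (v_0, \ldots, v_k)$, where $v_0 = X_0$ and $v_j = X_{u_{j-1}+1}$ for $j \geq 1$, and that each $u_j$ is the last time $X$ visits $v_j$. Consequently $X$ splits canonically as
\[
X = L_0 \cdot e_1 \cdot L_1 \cdot e_2 \cdots e_k \cdot L_k,
\]
with $e_j$ the oriented edge $v_{j-1} \to v_j$ and $L_j$ the closed sub-walk on $[u_{j-1}+1, u_j]$ rooted at $v_j$ (with $L_0$ on $[0, u_0]$). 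The last-visit property forces $L_j$ to avoid $\{v_0, \ldots, v_{j-1}\}$ for $j \geq 1$, while $L_0$ is an arbitrary closed walk at $v_0$. Conversely every such tuple arises from a unique walk, so one obtains a bijection between walks with $\overset{\rightarrow}{E}(X) = \eta = (v_0, \ldots, v_k)$ and ``forward tuples'' $(L_0, \ldots, L_k)$.

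Applying the same argument to $X^R$ and time-reversing yields the dual bijection: walks $Y$ with $\overset{\leftarrow}{E}(Y) = \eta$ correspond to ``backward tuples'' $(M_0, \ldots, M_k)$ where $M_j$ is a closed walk at $v_j$ avoiding $\{v_{j+1}, \ldots, v_k\}$ for $j \leq k-1$, and $M_k$ is arbitrary. Since both reconstructions concatenate the same spine edges $e_1, \ldots, e_k$, producing the lemma's $\phi$ reduces to exhibiting a bijection $\Phi$ between forward tuples and backward tuples which preserves the total multiset of oriented loop-edges; any such $\Phi$ lifts through the two decomposition bijections to the required $\phi$.

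I would construct $\Phi$ by iterated re-rooting. While some forward loop $L_j$ visits a later spine vertex $v_m$ (violating the backward constraint), I excise the sub-excursion of $L_j$ around $v_m$, re-root it at $v_m$, and absorb it into $L_m$. Choosing the transfer canonically --- for instance, always picking the largest such $m$ and cutting at the last visit of $L_j$ to $v_m$ --- guarantees termination at a tuple satisfying all the backward constraints. The inverse $\Phi^{-1}$ runs the symmetric procedure on past-visiting excursions in backward tuples. Edge preservation is automatic since each move only rearranges existing edges. The main obstacle will be making the rewriting unambiguous and checking that $\Phi$ is a genuine bijection rather than a many-to-many correspondence: one must track the cyclic re-rooting of each closed walk carefully so that $\Phi^{-1}$ recovers the initial data step by step. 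This bookkeeping, essentially a cyclic-rotation argument on closed walks together with a lexicographic ordering of the transfer operations, is the combinatorial heart of the lemma.
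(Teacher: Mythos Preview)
The paper does not prove this lemma; it is quoted from \cite{Lawler2012} (Lemma~7.2.1) and used as a black box, so there is no proof in the paper to compare against.

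On your proposal itself: the decomposition of a walk along its forward loop-erasure into spine edges $e_j$ and closed walks $L_j$ with $L_j$ avoiding $\{v_0,\ldots,v_{j-1}\}$ is correct, and so is the dual description for the backward erasure. Reducing the lemma to a multiset-preserving bijection between forward and backward loop tuples is exactly the standard setup. The gap is in your construction of $\Phi$. The step ``excise the sub-excursion of $L_j$ around $v_m$ and absorb it into $L_m$'' is not well-defined as stated: a closed walk at $v_j$ visiting $v_m$ several times has no canonical sub-excursion at $v_m$, and once material has been merged into $L_m$ you give no mechanism for the reverse procedure to locate the original cut. More seriously, the claim that $\Phi^{-1}$ is ``the symmetric procedure'' is unjustified --- running one rewriting system to its normal form and then a different one backwards does not in general return the starting point. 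You rightly flag this as ``the main obstacle'', but it is the entire content of the lemma rather than bookkeeping; without making each individual transfer move explicitly invertible (so that $\Phi$ is a composition of bijections), or replacing the rewriting scheme by an explicit induction along the spine, the argument remains a plan rather than a proof.
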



In this paper we will be interested by graphs $G_n$ obtained as growing sub-graphs of a graph $G_\infty$ covering the plane. We will always assume that all our graphs come with a fixed embedding in the plane and we will always consider wired spanning tree where the wiring is along the single infinite face. The graph $G_\infty$ will be assumed to have the following \emph{uniform crossing} property as per \cite{BLR16}.





\begin{assumption}
  Let $R$ be the horizontal rectangle $[0,3]\times [0,1]$, $B_1 = B((1/2,1/2),1/4)$ be the 
``starting ball'' and $B_2= B((5/2,1/2),1/4) $ be the ``target ball'. There 
exists constants $n_0 > 0$ and $\alpha>0$ such that for all $z 
\in \bbC$, $n \geq n_0$, $v \in  n B_1$ such that $v+z \in G_\infty$,
\begin{equation}
  \P_{v+z}(X \text{ hits }n B_2+z \text{ before exiting } (nR+z)) 
>\alpha.\label{eq:cross_left_right}
\end{equation}
The same statement as above holds for crossing from right to left or in vertical rectangles.
\end{assumption}

Under the above assumption, it makes sense to speak of the wired uniform spanning tree of $G_\infty$, see Corollary 4.21 in \cite{BLR16}. An important fact for the next section is that this measure is supported on one-ended trees.

In \cref{sec:spread}, we will allow ourself to state results uniformly over finite graphs satisfying the uniform crossing property. By this we mean uniformly over graphs which can be extended into an infinite planar graph which satisfies the uniform crossing but we never use the actual extension.

\subsection{Dimers/Spanning tree correspondence}

The dimers and spanning tree measures are very closely related through two distinct relations.

The simplest one is the Temperley's bijection (or more precisely its generalisation from \cite{Kenyon1}). This is a bijection between a wired spanning tree on an arbitrary planar graph $G$ (with the wiring along an exterior face as usual) and the dimer model on a (planar bipartite) graph $G_D$ defined as follows : Consider $G$ with its embedding and the dual graph $G^\dagger$ ; the vertex set of $G_D$ is the union of $G \cup G^\dagger$ (forming one bipartite class), and of one vertex for each intersection of primal and dual edges (forming the other bipartite class). One then has to remove a single boundary vertex from $G_D$. Each ``intersection'' vertex is adjacent to the $4$ endpoints of the primal and dual edges. In the bijection, each dimer corresponds to a single oriented edge in either the tree or the dual and one can set up weights that are preserved by the bijection.

\medskip

The other one (introduced in \cite{Kenyon2007b}) is a bit more involved so we only describe it in the setting of uniform dimers on the infinite hexagonal lattice $\cH$.

For every $p_a, p_b, p_c>0$, one can construct a one parameter family of infinite planar graphs ``associated'' with these probabilities.
 The graphs constructed that way are called T-graphs.
 They are naturally constructed together with a planar embedding covering the plane and they satisfy the uniform crossing assumption (Theorem 3.8 in \cite{BLRannex}).
 Furthermore white vertices of the hexagonal lattice are in bijection with faces of $T$. 

The relation between dimers and trees is that there exists a function $\phi$ from the set of one-ended spanning trees of $T$ to dimers configurations on $\cH$ such that if $\cT$ is a wired uniform spanning tree of $T$ then $\phi( \cT) $ is a whole plane Gibbs measure on $\cH$.
To elaborate a bit more, note that the dual of a one-ended spanning tree is also a one ended spanning tree. The map $\phi$ first considers the dual spanning tree of $\cT$, with edges oriented to infinity, then each oriented dual edge is mapped in a local way to a single dimer.

\medskip

Let us emphasize here that for weighted graphs, the dual of a uniform spanning tree of a graph $G$ is \emph{not} a uniform spanning tree of the dual graph $G^\dagger$. For example it is easy to convince one-self that it is not in general possible to transport weights from the primal graph onto the dual. In particular it is not possible to sample directly the dual tree of a T-graph with Wilson's algorithm.

\begin{figure}\label{fig:orientation}
\begin{center}
\def\svgwidth{0.5\textwidth}
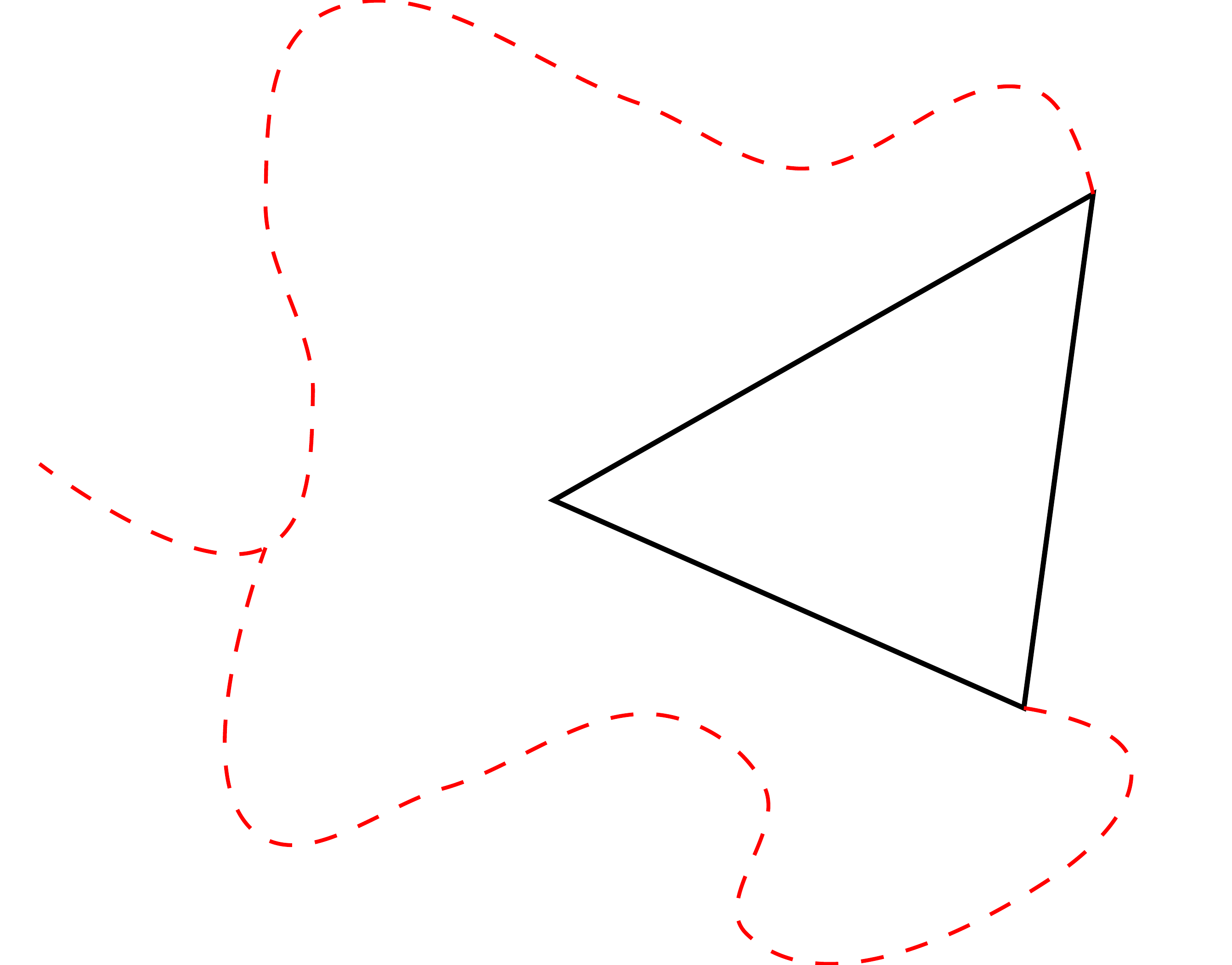
\caption{The configuration of the two primal paths (in dashed red) determines the orientation of the dual edge (blue arrow).}
\end{center}
\end{figure}

Also note that both mappings use the orientations of the dual edges, which is \emph{not} a local function of the primal tree. However note that, if $(xy)$ is a primal edge, the occupation and orientation of the dual edge $(xy)^\dagger$ is fully determined once we know the branches in the primal tree starting from $x$ and from $y$ (see \cref{fig:orientation}).

\medskip

Finally the height function behaves well in both correspondences. More precisely one can choose the conventions in the definition of the height function such that for all faces $x$, $y$ in the dimer graph, if $M$ is associated to a tree $\cT$ and $\gamma$ is the path in $\cT$ between $x$ and $y$,
\[
h_M(x) - h_M(y) = \frac{1}{2 \pi} W_{\text{int}}(\gamma),
\]
where $W_{\text{int}}$ is the intrinsic winding of a curve, i.e the change in $\arg \gamma'$ between the two endpoints. (To be completely accurate, one needs to do some local surgery to go from faces in the dimer graph to vertices in the tree graph. We do not dwell on these points because they only introduce deterministic $O(1)$ changes see \cite{Kenyon1,BLRannex} for full details.)
Finally the intrinsic winding above can also be computed using ``topological winding'' thanks to the following lemma. We define the topological winding of a curve $\gamma$ (assumed without loss of generality to be parametrised in $[0,1]$) around a point $z \notin \gamma$ as $W(\gamma, z) = \arg( \gamma(1) - z) - \arg(\gamma(0) -z)$, with the argument measured continuously along $\gamma$. If the endpoints are smooth, then $W(\gamma, \gamma(1) ) := \lim_{t \rightarrow 1} W(\gamma[0, t], \gamma(1))$ and $W(\gamma, \gamma(0) ) := \lim_{t \rightarrow 0} W(\gamma[t, 1], \gamma(0))$ are well defined.
\begin{lemma}\label{lem:intrinsic->top}[Lemma 2.1 in \cite{BLR16}]
Let $\gamma$ be a smooth self avoiding curve parametrised in $[0, 1]$with $\gamma'(s) \neq 0$ for all $s$. We have
\begin{equation}
W_{\text{int}}(\gamma) = W(\gamma, \gamma(1) ) + W( \gamma, \gamma(0) ).\label{eq:int->top_gen}
\end{equation}
\end{lemma}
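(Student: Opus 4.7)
The plan is to introduce a single continuous auxiliary function on a triangle in parameter space such that each of the three quantities appearing in the statement is an increment of this function along one of the three sides; the identity then follows from the tautological additivity of increments along a path.

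Concretely, let $T = \{(s,t) \in [0,1]^2 : s \leq t\}$ and define $h(s,t) = \arg(\gamma(t) - \gamma(s))$. On $T \setminus \{s = t\}$ the quantity $\gamma(t) - \gamma(s)$ does not vanish (because $\gamma$ is simple), and since this region is simply connected the argument lifts to a continuous real-valued function. The key technical step is to extend $h$ continuously to the diagonal by $h(s, s) := \arg \gamma'(s)$. This extension uses the hypothesis $\gamma'(s) \neq 0$ together with the Taylor expansion $\gamma(t) - \gamma(s) = (t - s) \gamma'(s) + o(t - s)$, which implies that the direction of $\gamma(t) - \gamma(s)$ converges to that of $\gamma'(s)$ (and not to its opposite) as $(s,t)$ approaches the diagonal from inside $T$, thanks to $t - s > 0$.

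With $h$ in hand, each of the three quantities identifies with an increment of $h$ along one side of $T$: on the diagonal, $h(1,1) - h(0,0) = \arg \gamma'(1) - \arg \gamma'(0) = W_{\text{int}}(\gamma)$; on the left edge $\{s = 0\}$, $h(0, 1) - h(0, 0) = \arg(\gamma(1) - \gamma(0)) - \arg \gamma'(0)$, which is exactly $W(\gamma, \gamma(0))$ by the limit definition of the latter; and on the top edge $\{t = 1\}$, $h(1,1) - h(0, 1) = \arg \gamma'(1) - \arg(\gamma(1) - \gamma(0))$, which matches $W(\gamma, \gamma(1))$ once one notes that swapping $\gamma(1) - \gamma(s)$ for $\gamma(s) - \gamma(1)$ in a continuous lift only adds a global constant $\pm \pi$ that cancels out of any increment. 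Summing the left-edge and top-edge increments equals the diagonal increment, giving the stated identity. The main obstacle is the continuous extension of $h$ to the diagonal; everything else is a tracking exercise on the definitions and on the branches of the argument.
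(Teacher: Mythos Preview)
The paper does not give its own proof of this lemma; it is simply quoted from \cite{BLR16} (Lemma~2.1 there), so there is no in-paper argument to compare against. Your proof is correct and is in fact the standard ``secant map'' argument: the function $h(s,t)=\arg(\gamma(t)-\gamma(s))$ on the closed triangle, extended to the diagonal by $\arg\gamma'(s)$, is continuous (the non-vanishing derivative and compactness of $[0,1]$ make the Taylor estimate uniform, which is the only point worth stressing), and the three increments along the sides telescope. Your handling of the sign flip on the top edge is also right: replacing $\gamma(1)-\gamma(s)$ by $\gamma(s)-\gamma(1)$ shifts a continuous lift by a global $\pm\pi$, which cancels in the difference. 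This is the same idea one finds in proofs of the Whitney--Graustein/Hopf Umlaufsatz and is presumably what \cite{BLR16} does as well.
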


\section{Robustness}\label{sec:robustness}

The key argument in this section is to play two observations against each other. On the one hand the effect of boundary conditions is ``carried'' only through level lines of the double dimer height function connecting to the boundary and is therefore only directly felt at height $O(1)$. On the other hand the height function at a point has diverging variance so the height will almost never be $O(1)$.

\subsection{The spread-out property}

In this section we give a formal definition of the second observation to be able to state and prove the main theorem. The proof that it actually holds for whole plane lozenge tiling measures is the most involved part of the paper and will be the purpose of \cref{sec:spread}.


\smallskip

Given $R > 0$ and a configuration $m$, we will write $\pi(R, m)$ for the dimer measure on configurations that agree with $m$ outside of $B(0, R)$. Note that with this notation, the Gibbs property for a measure $\mu$ states that $\mu( . | \, M|_{B(0, R)^c} ) = \pi(R, M)$.
\begin{definition}
We say that a dimer measure $\mu$ has the spread-out property if it satisfies the following. For all $\ve > 0$, there exists $R > 0$ such that,
\[
 \bbP_\mu\left[ \forall x \in \bbR, \, \bbP_{\pi(R, M)}[  h(0) \in (x, x+1)]   \leq \ve   \right] > 1 - \ve,
\]
where $h$ is defined by fixing the height of any point outside $B(0, R)$.
In other words, with hight probability the configuration outside of a large enough ball does not ``freeze" the height inside significantly.
\end{definition}
Note that the definition makes sense independently of the reference flow used because it gives a condition on all $x$.

\subsection{Proof of robustness}

In this section, we prove our main result that local convergence to any spread out Gibbs measure is robust under bounded perturbations of the boundary.

\begin{theorem}\label{thm:robustness}
Let $D_n$ be a sequence of (possibly random) domains containing $0$ and assume that the dimer measures on $D_n$ converge locally around $0$ to a spread-out infinite volume measure $\mu$. Let $D_n'$ be another sequence of (possibly random) domains containing $0$ and such that there exists $K > 0$ such that, for almost all realisation of $(D_n, D'_n)$, for any two sequences of configurations $M_n$ on $D_n$ and $M'_n$ on $D'_n$,
\[
\forall n, \sup_{v \in \partial (D_n \cap D_n')} |h_{M'}(v) - h_{M}(v)| \leq K .
\]
Then the uniform measures on $D'_n$ also converge locally around $0$ to $\mu$.
\end{theorem}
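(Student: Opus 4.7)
Proof proposal.

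I would prove the theorem by constructing a coupling between $M_n \sim \mu_{D_n}$ and $M'_n \sim \mu_{D'_n}$ under which $M_n|_{B(0,R)} = M'_n|_{B(0,R)}$ with probability at least $1 - \ve$, for any prescribed $\ve > 0$ and $n$ large enough. Given such a coupling, the hypothesized local convergence $\mu_{D_n} \to \mu$ transfers directly to $\mu_{D'_n} \to \mu$, yielding the theorem.

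The coupling is built at a scale $R_1 \gg R$. First I would sample $M_n$ and $M'_n$ and analyse the double dimer $M_n \cup M'_n$ on the common subdomain $D_n \cap D'_n$: the symmetric difference $M_n \Delta M'_n$ decomposes into interior loops and boundary-touching paths, and the hypothesis $|h_{M_n}(v) - h_{M'_n}(v)| \leq K$ on $\partial(D_n \cap D'_n)$ restricts the boundary-touching level lines of $h_{M_n} - h_{M'_n}$ to at most $2K+1$ distinct levels. Next, applying the Gibbs property conditional on the configurations outside $B(0, R_1)$, the conditional interior distributions are uniform matchings with boundary profiles $p_n, p'_n$ on $\partial B(0, R_1)$. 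A crucial observation is that this conditional dimer distribution depends on the profile only up to an additive integer constant, since shifting all heights by a constant leaves the dimer configuration invariant.

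The spread-out hypothesis enters at this point. Applied to $\mu$ and transferred to $\mu_{D_n}$ via local convergence, it guarantees that the conditional distribution of $h_{M_n}(0)$ given the exterior is $\ve$-diffuse with probability at least $1 - \ve$, provided $R_1$ is large. This diffuseness is precisely what is needed to couple the overall additive constant between the two measures: since the mismatch between the profiles lies in an integer window of size $O(K)$ by the boundary hypothesis, matching the additive constant succeeds up to a loss of $O(K \ve)$. This quantitatively links the two ingredients mentioned in the section introduction: the boundary effect confined to $O(1)$ heights, and the diverging variance / diffuseness of the height function at a point.

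The main obstacle, which I expect to be the hardest step, is coupling the profile \emph{shapes} (profiles modulo additive constant) on $\partial B(0, R_1)$ of $M_n$ and $M'_n$ to agree with high probability. Since the exteriors are sampled from $\mu_{D_n}$ and $\mu_{D'_n}$, whose local convergence is precisely what we want to compare, this step likely requires either a bootstrap argument (using convergence at scale $R_1$ to conclude at scale $R$, then iterating) or a direct comparison via a double-dimer / partition-function argument exploiting the bound $K$ on the boundary perturbation. The spread-out property is the essential new input: it prevents the cumulative additive mismatch from blowing up across scales, so that iteration can close without requiring an a priori knowledge of the local limit of $\mu_{D'_n}$.
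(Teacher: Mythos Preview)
Your proposal has a genuine gap: you miss the key coupling idea and end up stuck on the circular problem you yourself flag as the ``main obstacle''.

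The paper's argument is much more direct. Having sampled $M$ and $M'$ independently, define $M''$ by taking the double dimer $M\cup M'$ and \emph{reorienting every loop} to the orientation dictated by $M$; on paths (which run to the boundary) keep the orientation from $M'$. Because, conditionally on $M\cup M'$, the loop orientations are i.i.d.\ fair coins, $M''$ has exactly the law of $M'$. But by construction $M''$ agrees with $M$ everywhere except on the paths. So the whole question reduces to showing that no path of $M\cup M'$ enters $B(0,r)$. Since paths are level lines of $h=h_{M'}-h_M$ at heights bounded by $K+1$ (from the boundary hypothesis), the Lipschitz bound on $h$ means that no path can touch $B(0,r)$ once $|h(0)|>Cr+K+1+C$. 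The spread-out property (transferred from $\mu$ to $\mu_{D_n}$ via local convergence at scale $R$) makes this event occur with probability $1-O(\ve)$, and the proof is done.

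Notice what this avoids: there is no need to compare boundary profiles on $\partial B(0,R_1)$ between $\mu_{D_n}$ and $\mu_{D'_n}$, and hence no bootstrap and no circularity. Your scheme of matching profile shapes modulo an additive constant would indeed require knowing something about the local behaviour of $\mu_{D'_n}$ in advance, and it is not clear how to initialise or close the iteration you suggest. The loop-reorientation trick sidesteps this entirely, because the equality in law $M''\overset{d}{=}M'$ is exact and holds for every $n$ without any input about the limit of $\mu_{D'_n}$. You had the right two ingredients (double-dimer level lines confined to $O(1)$ heights, spread-out height at the origin) but combined them via conditioning on an annulus rather than via the orientation swap that makes the argument one step.
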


Note that if the domains are random, then $D_n$ and $D_n'$ have to be defined in the same probability space and that the coupling is important in the hypothesis even though it does not appear in the conclusion.

\begin{proof}

For ease of notation, we drop $n$ from the notations whenever it is not explicitly varying and denote by $h$ the double dimer height function, i.e. $h = h_{M'} - h_M$.

Recall from the introduction that, to any pair of dimer configurations, we can associate a family of loops, edges and paths connecting boundary points and that these paths and loops are level lines of $h$. In particular, from the assumption on the boundary height, we see that any face adjacent to a path has height bounded by $K+1$.

Let us define three coupled configurations $M$, $M'$ and $M''$ as follow. $M$ and $M'$ are independent configurations in $D$ and $D'$. $M''$ is a matching of $D'$ obtained by considering the double dimer configuration $M \cup M'$ and changing the orientation of all loops from $M'$. Formally an edge $e$ is in $M''$ in exactly these two cases : either $e \in M'$ and $e$ is not in a loop of $M \cup M'$, or $e$ is in a loop of $M \cup M'$ and $e \in M$.

It is easy to see that $M$ and $M''$ agree everywhere on $D \cap D'$ except on the paths defined by $M \cup M'$. Furthermore since the conditional laws of $M$ and $M'$ given  $M \cup M'$ are given by iid orientations of the loops, we clearly have :

\begin{claim}
$M''$ has the same law as $M'$.
\end{claim}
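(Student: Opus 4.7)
The plan is to view the construction of $M''$ as flipping a uniform vector of coin flips attached to the loops of the double dimer configuration $\omega := M \cup M'$, and then conclude by a trivial symmetry argument. First I would condition on $\omega$. Since $D$ and $D'$ are finite and $M, M'$ are sampled independently from their respective uniform laws, the description of the double dimer model recalled in Section \ref{sec:dimers}--\ref{sec:dimers} applies: conditional on $\omega$, every edge appearing as a single edge belongs automatically to both $M$ and $M'$; on each path, the alternation is forced by the endpoints lying in $D \Delta D'$; and on each loop the orientation is an independent fair coin flip. Thus, writing $k(\omega)$ for the number of loops of $\omega$, the conditional law of $(M, M')$ given $\omega$ is parametrised by a uniform element $\sigma \in \{+,-\}^{k(\omega)}$.

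Next I would unpack the definition of $M''$ in these coordinates. By construction, $M''$ agrees with $M'$ on every edge that lies on a path of $\omega$ or is a single edge of $\omega$; on a loop of $\omega$ it takes instead the $M$-edges of that loop. In terms of $\sigma$, this is exactly the instruction to reverse the orientation of every loop. Hence the vector of loop orientations that produces $M''$ is $-\sigma$. Before concluding one should quickly verify the obvious consistency checks (this was essentially already done in the paragraph preceding the claim): the map is well-defined because loops of $\omega$ lie in $D \cap D'$, and on such loops the $M$-edges and the $M'$-edges cover the same vertex set, so swapping them yields another valid perfect matching of $D'$.

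The conclusion is then immediate. The map $\sigma \mapsto -\sigma$ is a bijection of $\{+,-\}^{k(\omega)}$ that preserves the uniform distribution, so the conditional law of $M''$ given $\omega$ coincides with the conditional law of $M'$ given $\omega$. Integrating over $\omega$ gives the claim. I do not expect any serious obstacle: once the construction is translated into the language of loop orientations, the result is a one-line symmetry statement, and the only subtlety is to correctly identify which edges of $\omega$ are affected by the flip (loops only) and which are not (paths, whose orientations are frozen by the boundary vertices, and single edges, which sit in both matchings regardless).
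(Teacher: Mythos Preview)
Your proposal is correct and is exactly the argument the paper has in mind: the paper's own justification is the single sentence ``since the conditional laws of $M$ and $M'$ given $M \cup M'$ are given by iid orientations of the loops, we clearly have'', and you have simply spelled out the underlying symmetry $\sigma \mapsto -\sigma$ in detail. (Minor typo: your reference ``Section~\ref{sec:dimers}--\ref{sec:dimers}'' repeats the same label.)
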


Recalling the characterisation of local convergence from the end of \cref{sec:dimers}, we see that we only need to prove that the paths in $M \cup M'$ stay away from $0$ with hight probability.


Now fix $\ve$ and $r$ and let $C$ be the constant from the Lipschitz property \eqref{eq:lipschitz}.  We also choose $R$ from the spread-out property suited for the constant $\ve$ and afterwards $n$ large enough so that a sample $M_\infty$ of $\mu$ can be coupled with $M_n$ on $\partial B(0, R)$ with probability at least $1-\ve$. 
Now let us explore the whole configuration $M_n'$, together with
 $M_n$ and $M_\infty$ everywhere outside $B(0,R)$. By union bound, with probability at least $1-2\ve$, we can assume that $M_n$ agrees with $M_\infty$ on $\partial B(0, R)$ and that 
\[
\forall x \in \bbR, \, \bbP_{\pi(R, M_n)}[  h(0) \in (x, x+1)  ] \leq \ve.
\]
In particular, conditioned on this event, $\bbP(|h(0)| \leq Cr + K +1 + C) \leq 2(Cr + K + C +1)\ve$ and overall $\bbP(|h(0)| \leq Cr + K +1 + C) \leq 2(Cr + K + C + 2)\ve$.

As we noted before, the height along a path is is at most $K+1$ so, recalling \eqref{eq:lipschitz}, we see that on the hight probability event $|h(0)| > Cr + K +1 + C$, no path intersects $B(0, r)$. We can also clearly extend $M_n$ and $M_\infty$ so that they agree on $B(0, R)$. Overall, we have a coupling of $M_n''$ and $M_\infty$ such that they agree on $B(0, r)$ with hight probability, which concludes the proof. 
\end{proof}

\begin{remark}
Note that we do \emph{not} need to consider the law of $M''$ at the step where we have discovered $M'$ and $M$ outside of $B(0,R)$. In particular the fact the $M''$ does not respect the domain Markov property once we condition on $M'$ is not an issue.
\end{remark}

\begin{corollary}\label{cor:translation_invariance}
If $\mu$ is an infinite volume Gibbs measure and $\mu$ satisfies the spread-out property, then $\mu$ is translation invariant.
\end{corollary}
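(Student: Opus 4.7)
The plan is to apply \cref{thm:robustness} with a carefully coupled pair of random domains that realise $\mu$ on one side and $\tau_w\mu$ on the other, for an arbitrary admissible lattice translation $w$. Once this is done, local convergence of both sides to $\mu$ forces $\tau_w\mu$ and $\mu$ to agree on every ball around $0$, hence to coincide, and translation invariance follows by letting $w$ vary.

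Sample a single $M\sim\mu$ once and for all, and set $\tilde M:=\tau_w M$, so that $\tilde M\sim\tau_w\mu$. I would take $D_n$ to be the ball $B(0,n)$ equipped with the boundary height $h_M|_{\partial B(0,n)}$, and $D'_n$ to be the same ball equipped with $h_{\tilde M}|_{\partial B(0,n)}$; both are coupled random domains defined on the same probability space. By the Gibbs property applied to $\mu$, the uniform dimer measure on $D_n$, extended outside $B(0,n)$ by $M$ itself, is after averaging over $M$ exactly $\mu$; for $R<n$ its restriction to $B(0,R)$ is therefore $\mu|_{B(0,R)}$, so local convergence to $\mu$ around $0$ is trivial. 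The same computation for $\tau_w\mu$ shows that the analogous averaged measure on $D'_n$ is $\tau_w\mu$.

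To check the boundary-height hypothesis of \cref{thm:robustness}, I fix the global additive constant in $h_{\tilde M}$ by the convention $h_{\tilde M}(x)=h_M(x-w)$. Then for every $x\in\partial B(0,n)$ the almost-Lipschitz bound \eqref{eq:lipschitz} yields
\[
|h_M(x)-h_M(x-w)|\le C|w|+C,
\]
uniformly in $n$ and $x$, so the hypothesis holds with $K=C|w|+C$. \cref{thm:robustness} then gives that the measure on $D'_n$ also converges locally around $0$ to $\mu$. Since this measure averages to the constant $\tau_w\mu$, we obtain $\tau_w\mu|_{B(0,R)}=\mu|_{B(0,R)}$ for every $R$, and hence $\tau_w\mu=\mu$.

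The only point requiring care is book-keeping for random domains: the configuration outside $B(0,n)$ is itself random (given by $M$, respectively $\tilde M$), so I have to make sure the local-limit statement provided by \cref{thm:robustness} is the right one and that averaging over the outside randomness really reproduces $\mu$ and $\tau_w\mu$. This is pure Gibbs bookkeeping rather than a genuine difficulty, and once it is set up the rest is immediate from the almost-Lipschitz estimate and \cref{thm:robustness}.
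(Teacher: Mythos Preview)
Your proof is correct and follows essentially the same approach as the paper's: both take $D_n$ to be the ball $B(0,n)$ with the random boundary condition induced by a sample $M\sim\mu$, and both take $D'_n$ to be a unit (or $w$-) translate of this data, then invoke \cref{thm:robustness}. The only cosmetic difference is that the paper phrases $D'_n$ as ``a translation of $D_n$ by $1$'' (i.e.\ shifts the ball), whereas you keep the ball fixed and shift the boundary configuration; these are equivalent, and your version has the minor advantage that $D_n\cap D'_n=B(0,n)$, making the boundary bookkeeping and the application of the Lipschitz estimate \eqref{eq:lipschitz} slightly cleaner.
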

\begin{proof}
Apply the previous result to the domains $D_n$ defined by conditioning $\mu$ outside of $B(0, n)$ and with $D'_n$ a translation of $D_n$ by $1$. Then on the one hand the law around $0$ in $D'_n$ is a translation of $\mu$ and on the other hand it converges to $\mu$.
\end{proof}

%

\section{Proof of the spread out property}\label{sec:spread}

In this section, we show that the translation invariant ergodic Gibbs measures on lozenge tilings satisfy the spread-out property, which concludes the proof of \cref{thm:intro}. The main difficulty is that the conditioning on everything outside a domain is not something easily accessible with standard techniques so we need to go through spanning tree coupling arguments. 

Let us also remark on a difficulty with the overall structure of the proof. It is not a priory clear that, for a given T-graph $T$, the associated whole plane Gibbs measure $\phi(\cT)$ is translation invariant. Conversely, it is also not clear that one can represent the translations invariant Gibbs measure as coming from a tree. We will actually make this connection here, proving first that measures of the form $\phi(\cT)$ satisfy the spread out property and then concluding by \cref{cor:translation_invariance} and \cref{thm:sheffield}.

In this section, to be closer to the setup of similar proofs in \cite{BLR16}, we change our scaling convention and we think of graphs $D^\d$ with a mesh size $\delta$ and a fixed diameter instead of growing domains $D_n$ with a fixed scale. The main difference is with uniform crossing property that has to be restated according to this new convention, see for example \cite{BLR16} Section 4.1. When speaking of the wired UST, we always consider the wired boundary to be all vertices adjacent to the unbounded face of $D^\d$. With a slight abuse of notation, we sometime identify a graph with the union of its closed bounded faces.

\subsection{Decoupling in tree measures}

In this section, we will show that, at the level of the spanning tree, the configuration outside a large ball becomes independent of the configuration around $0$.

\begin{lemma}\label{lem:TVdecoupling}
For all $\ve > 0$, there exists $R$ (depending only on $\ve$ and the uniform crossing constants) such that uniformly over $\delta \leq 1$, uniformly over all graphs $D^\d$ such that $B(0,R) \subset D^\d$  (and in particular for graphs covering the plane) and satisfying the uniform crossing assumption, the following holds. Let $\cT$ be the the wired UST in $D^\d$ and let $\cT_R$ be the sub-tree made of all the branches starting from a vertex in $B(0, R)^c$, or in other word the minimal sub-tree of $\cT$ spanning all vertices of $B(0,R)^c$. Then with probability at least $1-\ve$,
\[
 \cT_R \cap B(0,1) = \emptyset.
\]
\end{lemma}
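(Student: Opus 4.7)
I would reformulate the event in terms of the descendant structure of the one-ended tree $\cT$ rooted at $\partial$, and then control it using Wilson's algorithm together with iterated uniform crossing estimates. Since $\cT_R$ is, by definition, the union of the ancestor paths of all vertices in $B(0,R)^c$, the event $\cT_R \cap B(0,1) \neq \emptyset$ is equivalent to the existence of some $x \in B(0,1)$ whose descendant subtree $P(x) = \{v : x \text{ lies on the branch of } v\}$ contains a vertex outside $B(0,R)$. The goal is thus to show that the past of each vertex in $B(0,1)$ stays inside $B(0,R)$ with high probability.

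For a fixed $x \in B(0,1)$, my plan is to sample the branch $\eta_x$ of $\cT$ from $x$ to $\partial$ first (so that $\eta_x$ is distributed as a loop-erased random walk from $x$), and then sample the remainder of $\cT$ conditionally via Wilson's algorithm. Under this order, a vertex $v$ lies in $P(x)$ exactly when the loop-erased walk from $v$ to the current forest $\eta_x\cup\partial$ terminates at $x$, rather than elsewhere on $\eta_x$ or on $\partial$. The uniform crossing assumption yields the annular estimate that a random walk from $\partial B(0,2r)$ exits $B(0,4r)$ before reaching $\partial B(0,r)$ with probability at least $\alpha > 0$, and I would iterate it dyadically in two directions. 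Outward from $x$, this shows that $\eta_x$ crosses every dyadic annulus $B(0,2^{k+1})\setminus B(0,2^k)$ for $0 \leq k \leq \log_2 R$ with probability at least $1-\ve/2$, giving $\eta_x$ a radial structure. Inward from $v\in B(0,R)^c$, each dyadic crossing of the walk has positive probability of being absorbed on $\eta_x$, so, conditionally on the good event for $\eta_x$, the probability that the walk from $v$ first hits $\eta_x \cup \partial$ precisely at $x$ decays polynomially in $R$, uniformly in $v$ and the underlying graph.

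Summing this bound over $v\in B(0,R)^c$ would give $\P[P(x) \not\subset B(0,R)] \leq C R^{-\beta}$ for universal constants $C,\beta>0$. To avoid a naive union bound over $x \in B(0,1)$, whose size grows as $\delta^{-2}$, I would reduce to the ``local roots'' of $B(0,1)$---the vertices of $\cT \cap B(0,1)$ whose parent edge exits $B(0,1)$---whose pasts cover the pasts of all $B(0,1)$ vertices, and whose number is bounded w.h.p. by an arm-count estimate which itself follows from uniform crossing. The main obstacle I expect is precisely this polynomial absorption bound: a generic two-dimensional random walk hitting probability decays only logarithmically, so one must genuinely exploit the slit structure of $\eta_x$ scale by scale, using planar uniform crossing in a quantitative rather than a soft way.
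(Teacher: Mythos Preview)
Your reformulation in terms of the past $P(x)$ is correct, and your single-vertex estimate is fine: conditionally on $\eta_x$ crossing every dyadic annulus, for any fixed $v\in B(0,R)^c$ the simple walk from $v$ must traverse $\sim\log R$ annuli without touching $\eta_x$, so $\bbP[v\in P(x)\mid \eta_x\text{ nice}]\le (1-\alpha)^{c\log R}=R^{-\beta}$ with $\beta$ depending only on the uniform crossing constant. The gap is the next line. ``Summing this bound over $v\in B(0,R)^c$'' is a union bound over a set whose cardinality is not controlled: even restricting to $v\in\partial B(0,R)$ you pick up $\asymp R/\delta$ terms, and the resulting bound $R^{1-\beta}/\delta$ is useless because the statement must be uniform in $\delta\le 1$ and you have no way to push $\beta$ above $1$. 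So the step from ``each $v$ is unlikely to lie in $P(x)$'' to ``$P(x)$ is unlikely to reach $\partial B(0,R)$'' is missing a genuine idea, and your local-roots trick on the $x$ side does not touch this issue.

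The paper sidesteps exactly this difficulty by working from the other end. After rescaling so that the claim becomes $\cT_1\cap B(0,1/R)=\emptyset$, one observes that $\cT_1$ reaches $B(0,1/R)$ only if some branch started from a vertex of the annulus $B(0,1)\setminus B(0,1/2)$ does. Schramm's finiteness lemma (Lemma~4.16 in \cite{BLR16}) then says that, sampling those branches via Wilson's algorithm, after a \emph{fixed} number $K=K(\ve)$ of them (independent of $\delta$) all remaining branches have diameter $<1/8$ with probability $\ge 1-\ve$, hence cannot reach $B(0,1/R)$. For the $K$ exceptional branches one applies the LERW point-avoidance tail (Proposition~4.8 in \cite{BLR16}) and chooses $R$ large. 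The crucial gain is that the union bound is over $K$ branches rather than over $\asymp R/\delta$ vertices. Your ``arm-count'' intuition for the local roots is in fact the same finiteness phenomenon; you would need to invoke it on the $\partial B(0,R)$ side as well, at which point you have essentially reproduced the paper's argument.
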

\begin{proof}
First note that by playing with the scaling, the lemma is equivalent to saying that, for any $\delta$ small enough, uniformly over graphs with $B(0,1) \subset D^\d$, for $R$ large enough, with probability at least $1-\ve$,
\[
\cT_1 \cap B(0,1/R) = \emptyset.
\]
Fix $\ve > 0$, by the version of Schramm's finiteness theorem in Lemma 4.16 of \cite{BLR16}, one can sample all branches starting in the annulus $B(0,1) \setminus B(0,1/2)$ in such a way that, after the $K$ first branches, the probability that any of the others have diameter more than $1/8$ is at most $\ve$. By proposition 4.8 in \cite{BLR16} (establishing a tail estimate for the probability that loop-erased random walk comes close to a point), by chosing $R$ large enough we can ensure that the probability that any of the $K$ first branches intersects $B(0, 1/R)$ is at most $\ve$. This concludes the proof because after sampling all branches starting from $B(0,1) \setminus B(0,1/2)$, the ball $B(0, 1/R)$ is deterministically separated from $B(0, 1)^c$.
\end{proof}


\begin{lemma}\label{lem:decoupling}
Fix $\ve> 0$. There exists $R > 0$ such that, uniformly over $\delta \leq 1$, there exists a set $\cE$ of configurations on $\partial B(0,R)$ with the following properties. Firstly, it has probability at least $1- \ve$ under the measure $\phi(\cT)$. Secondly, for any $M \in \cE$, one can sample $\pi_M$ in two steps as follows. The first step is the sampling of a sub-tree with an a priori complicated law, but such that, with probability $1-\ve$, no branch intersects $B(0,1)$ and deterministically they separate $0$ from infinity. The second step is the sampling of a uniform spanning tree in the remaining domain which has wired boundary conditions.
\end{lemma}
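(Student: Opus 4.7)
The strategy is to transfer the problem to the T-graph spanning tree $\cT$ via the correspondence $\phi$ and to build the two-step sampling by exploring from outside in. First I would apply \cref{lem:TVdecoupling} with parameter $\ve^2$ to obtain a radius $R_0$ such that, under the marginal law of $\cT$, $\cT_{R_0} \cap B(0,1) = \emptyset$ with probability at least $1 - \ve^2$. The radius $R$ of the statement is then chosen slightly larger than $R_0$, the extra room absorbing the bounded-range non-locality of $\phi$: the dimer status of any edge of $\cH$ near $\partial B(0,R)$ is determined (as recalled in \cref{sec:previous}) by the branches of $\cT$ at the two endpoints of a nearby T-graph edge, and these endpoints all lie outside $B(0,R_0)$ once $R$ exceeds $R_0$ by the diameter of the correspondence window. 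Hence $M\vert_{\partial B(0,R)}$ is a deterministic function of $\cT_{R_0}$.

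The two-step sampling then reads as follows. In Step~1, given $M\vert_{\partial B(0,R)}$, sample $\cT_{R_0}$ from its conditional law: this law is ``complicated'' but we do not need to describe it further. In Step~2, sample the remaining branches as a wired uniform spanning tree on the complement of $\cT_{R_0}$ in $D^\d$, with wired boundary along both $\cT_{R_0}$ and the original wired boundary of $D^\d$. By the domain Markov property of USTs (sampled e.g.\ via Wilson's algorithm) this is exactly the conditional law of $\cT \setminus \cT_{R_0}$ given $\cT_{R_0}$; the observation that $M\vert_{\partial B(0,R)}$ is already $\cT_{R_0}$-measurable ensures that no additional conditioning on $M$ is needed in Step~2. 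Applying $\phi$ to the full tree produces a sample of $\pi_M$. Finally, the exceptional set $\cE$ is defined by Markov's inequality as the set of $M$ for which $\P[\cT_{R_0} \cap B(0,1) \neq \emptyset \mid M\vert_{\partial B(0,R)}] \leq \ve$; since the unconditional failure probability is at most $\ve^2$, we get $\P(\cE) \geq 1 - \ve$.

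The main point I expect to require the most care is the deterministic separation property: one must show that the embedded $\cT_{R_0}$ (a connected subtree of $\cT$ spanning all vertices of $T$ outside $B(0,R_0)$, with every branch going to infinity by one-endedness) places $0$ in a bounded complementary component of the plane, so that Step~2 is a bona fide finite-volume UST sampling. This relies on planarity of the T-graph embedding together with one-endedness, and may require enlarging the ``sub-tree'' by a deterministic set of arcs (for instance primal segments along $\partial B(0,R_0)$ connecting the endpoints of consecutive exiting branches) to form a genuine topological barrier around $0$. A secondary point is a careful bookkeeping, based on the correspondence recalled in \cref{sec:previous}, to establish the $\cT_{R_0}$-measurability of $M\vert_{\partial B(0,R)}$ claimed in the first paragraph.
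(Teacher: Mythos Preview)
Your approach is essentially the paper's: apply \cref{lem:TVdecoupling}, observe that the dimer configuration outside $B(0,R)$ is measurable with respect to $\cT_R$ (so that conditioning further on $M$ does not alter the second-step UST law), and extract the good set $\cE$ by Markov's inequality. Your use of $\ve^2$ in place of $\ve$ and your two radii $R_0<R$ to absorb the bounded non-locality of $\phi$ are cosmetic refinements of exactly the same argument; the paper simply writes ``$M$ is a function of $\cT_R$'' and obtains $\sqrt{\ve}$ bounds instead.

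One comment: your concern about the deterministic separation is misplaced and you should not plan to add auxiliary arcs. By definition $\cT_{R_0}$ is the minimal subtree of $\cT$ spanning \emph{every} vertex of $D^\d$ lying in $B(0,R_0)^c$, so the graph complement $D^\d\setminus\cT_{R_0}$ is automatically a finite vertex set contained in $B(0,R_0)$. The second step is therefore a genuine finite-volume wired UST with no further topological input needed; planarity and one-endedness play no role at this point.
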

\begin{proof}
Fix $\ve >0 $ and consider the associated $R$ as per \cref{lem:TVdecoupling}. As above we denote by $\cT_R$ the law of the sub-tree spanned by $B(0,R)^c$.  
We know that $\P[\cT_R \cap B(0,1) \neq \emptyset] \leq \ve$. We also note that deterministically the domain left by $\cT_R$ has Dirichlet boundary condition and, by Wilson's algorithm, adding to $\cT_R$ an independent wired spanning tree of $G \setminus \cT_R$ gives a fair sample of $\cT$.


Now let us fix a dimer configuration on $B(0, R)^c$ which with some abuse of notation we still call $M$.
Of course, one way to sample $\pi_M$ fairly is to first sample $\cT_R$ given $\pi_M$ and then to complete with the law of $\cT$ given $\cT_R$ and $M$. However since $M$ is a function of $\cT_R$, the conditional law of $\cT$ given $\cT_R$ and $M$ is the same as the law of $\cT$ given only $\cT_R$ which is identified above as a independent wired UST.


To complete the proof of the lemma, we only need to find the good set $\cE$. 
We have
$
  \P[\cT_R \cap B(0,1) \neq \emptyset] = \E[ \pi_M[\cT_R \cap B(0,1) \neq \emptyset]] \leq \ve
$ but $\pi_M[\cT_R \cap B(0,1) \neq \emptyset]$ is in $[0,1]$ so applying Markov's inequality we find
$$
\P [  \pi_M[T_R \cap B(0,1) \neq \emptyset] \geq \sqrt{\ve}]\leq \sqrt{\ve}$$ which concludes.
\end{proof}

To finish the proof, we will argue that, conditionally on everything else, the last step provides the spread out property.
\begin{proposition}\label{prop:non-concentration}
Fix $\ve > 0$, there exists a $\delta_0$ (depending only on $\ve$ and the constants in the uniform crossing) such that for all $\delta \leq \delta_0$, for any graph $D^\d$ such that $B(0,1) \subset D^\d$ and satisfying the uniform crossing property, the wired uniform spanning tree in $D^\d$ satisfies
\[
\forall x \in \bbR, \, \bbP[ h(0) \in (x, x+1) ] \leq \ve,
\] 
where the choice of constant in the height function is such that the boundary height is fixed.
\end{proposition}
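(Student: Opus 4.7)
The plan is to express $h(0)$ as a topological winding of a branch in the dual spanning tree and then exploit an anti-concentration estimate for this winding, whose variance diverges like $\log(1/\delta)$.

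By the dimer-tree correspondence and \cref{lem:intrinsic->top}, once the boundary height is fixed, $h(0)$ is determined (up to a deterministic $O(1)$ constant) by the topological winding $W$ of the branch $\gamma$ of the dual tree joining a face near $0$ to the boundary, measured around the two endpoints. It therefore suffices to show that, for any $x \in \bbR$, $\bbP(W \in [x, x + 2\pi]) \leq \ve$ as soon as $\delta \leq \delta_0$.

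I would sample this branch using Wilson's algorithm started from a face near $0$, which produces a loop-erased random walk on the T-graph. Its trace is then decomposed along the dyadic annuli $A_k = B(0, 2^{-k}) \setminus B(0, 2^{-k-1})$ for $k_0 \leq k \leq K := \lfloor \log_2(1/\delta) \rfloor$, writing $W$ as a sum of increments $\Delta_k$ collected during $\gamma \cap A_k$, plus bounded contributions from the innermost and outermost scales. Using \cref{lem:back_loop_erasure} to swap forward and backward loop-erasure, one can view $\gamma$ as being generated outward from $0$, so that the scales are explored in order and each $\Delta_k$ is measurable with respect to a natural filtration $\mathcal{F}_k$ at scale $k$. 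Two inputs from \cite{BLR16} and the uniform crossing assumption then apply: first, by iterating crossing estimates, each $|\Delta_k|$ has exponential tails uniformly in $k$ and $\delta$; second, and crucially, there is a constant $c > 0$ depending only on the uniform crossing constants, and some $v \neq 0$, such that uniformly over the conditioning both $\bbP(\Delta_k \geq v \mid \mathcal{F}_k) \geq c$ and $\bbP(\Delta_k \leq -v \mid \mathcal{F}_k) \geq c$. This uniform anti-concentration at every scale is obtained by planting an extra forced loop around $0$ inside $A_k$ in either orientation, using the crossing estimates along two topologically distinct routes and a positive-probability coupling of the resulting LERWs.

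From there, the martingale version of Rogozin's concentration function inequality yields $\sup_x \bbP(W \in [x, x + 2\pi]) \leq C/\sqrt{K} = O(1/\sqrt{\log(1/\delta)})$, which can be made smaller than $\ve$ by choosing $\delta_0$ small enough. The main obstacle I anticipate is the uniform anti-concentration of $\Delta_k$ conditionally on the past: loop-erasure is non-local in time, so one has to ensure that an inserted forced loop around $0$ survives subsequent loop removals. The cleanest route is likely to combine the domain Markov property of LERW in a carefully chosen outward exhaustion with the annular crossing and tail estimates of \cite{BLR16}, so that the needed topological flexibility inside each annulus is preserved after the erasing step.
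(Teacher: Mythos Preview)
Your overall strategy matches the paper's: express $h(0)$ as the winding of a branch, decompose into annular scales, and argue that enough scales contribute anti-concentration. You also correctly identify the main obstacle --- loop-erasure is non-local, so a winding contribution planted at one scale can be destroyed by a later return of the walk. However, your proposed resolution is too vague, and the specific claim that after switching to backward erasure ``each $\Delta_k$ is measurable with respect to a natural filtration $\mathcal F_k$ at scale $k$'' is not correct: the walk can (and typically does) revisit the annulus $A_k$ many times, so no outward filtration decouples the scales. This is precisely the point where a genuine idea is needed, and your sketch does not supply it.

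The paper does not use a filtration/martingale structure at all. Instead it first conditions on the entire sequence $\cS$ of crossing positions of the circles $C_i$, which fixes how many visits each scale receives. It then isolates a linear-in-$\log(1/\delta)$ set of \emph{isolated} scales: scales $j$ where the last three crossings of $C_{j-1},C_j,C_{j+1}$ are consecutive, the piece just before contains a loop separating $0$ from $\infty$, and the piece just after does not return too far inward. After further conditioning on every elementary piece of random walk except the single outgoing piece at each isolated scale, only that piece remains random. For each such scale the paper builds an explicit coupling of two copies of that piece following concrete curves $\gamma^1,\gamma^2$ that differ by exactly one turn around $0$, and --- crucially --- uses a mixed forward/backward loop-erasure procedure (\cref{lem:LE}) to ensure that on a positive-probability event the two resulting loop-erased branches agree outside $B(0,e^{j+5/6})$ and differ in winding by exactly $2\pi$. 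This turns the winding, conditionally on everything else, into a constant plus $2\pi$ times an independent binomial, from which the spread-out property is immediate. The three ingredients you are missing are: the conditioning on $\cS$ plus all but one piece per scale (replacing your filtration), the definition and abundance of isolated scales (which is what guarantees the planted turn survives erasure), and the mixed-direction loop-erasure lemma (without which the two coupled erasures cannot be made to agree near the outer boundary of the scale).
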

The proof of this proposition will be postponed until \cref{sec:single-branch}. Before that we will give the applications to lozenge tiling and conclude the proof of \cref{thm:intro}, admitting \cref{prop:non-concentration}.

\subsection{Application to lozenge tilings}

In this section, we show that the whole plane translation invariant Gibbs measures on lozenge tilings are spread-out, showing that \cref{thm:robustness} implies \cref{thm:intro}.


Recall from \cite{BLRannex} that to a one-ended spanning tree $\cT$ of $G$, one can associate a dimer configuration $\phi(\cT)$. Furthermore, if $\cT$ is taken according to the infinite uniform spanning tree measure (as obtained by taking a local limit of wired spanning tree measures, see Corollary 4.20 in \cite{BLR16}) then $\phi(\cT)$ is a whole plane Gibbs measure. 

\begin{proposition}\label{prop:identification}
Let $G$ be a whole plane T-graph associated with $p_a, p_b, p_c > 0$ and let $\cT$ be a wired spanning tree of $G$, then the dimer measure $\phi(\cT)$ satisfies the spread-out property and is the unique translation invariant ergodic Gibbs measure with dimer densities $p_a, p_b, p_c$.
\end{proposition}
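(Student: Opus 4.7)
The plan is to handle the two claims of \cref{prop:identification}---the spread-out property and the identification with the unique translation-invariant ergodic Gibbs measure of slope $(p_a,p_b,p_c)$---in that order. I would establish spread-out first, derive translation invariance from \cref{cor:translation_invariance}, observe that the densities are built into the T-graph construction, and finally rule out a non-trivial ergodic decomposition to get uniqueness via \cref{thm:sheffield}.

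For the spread-out property I would apply \cref{lem:decoupling} to the wired UST $\cT$ of the T-graph $G$, which satisfies uniform crossing by Theorem 3.8 of \cite{BLRannex}. For a dimer configuration $m$ on $\partial B(0,R)$ lying in the good event $\cE$, the conditional law decomposes as: first a subtree that, except on an $\ve$-bad event, avoids $B(0,1)$ while deterministically separating $0$ from infinity, followed by an independent wired UST on the residual domain $D^\ast \supset B(0,1)$. Translating back through $\phi$, the first step fixes the dimer configuration outside a neighbourhood of $0$ together with a constant offset of the height function there, and the second step produces the fine structure near $0$. Via the winding identity $h_M(x) - h_M(y) = \frac{1}{2\pi} W_{\text{int}}(\gamma)$ applied along the branch of the inner UST from $0$ to the wired boundary, $h_M(0)$ becomes, up to the fixed offset, a height-type functional of the inner UST. \cref{prop:non-concentration} then gives $\bbP_{\pi_m}[h(0) \in (x,x+1)] \leq \ve$ uniformly in $x$, which combined with the $\ve$-loss from $\cE$ yields the spread-out definition.

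Granted the spread-out property, \cref{cor:translation_invariance} immediately gives translation invariance of $\phi(\cT)$, and the fact that its dimer densities are $(p_a,p_b,p_c)$ is built into the T-graph construction of \cite{Kenyon2007b, BLRannex}. It then remains to identify $\phi(\cT)$ with the unique ergodic Gibbs measure $\nu$ from \cref{thm:sheffield}. Decomposing $\phi(\cT) = \int \mu_\theta \, d\rho(\theta)$ into translation-invariant ergodic Gibbs measures, the spread-out property rules out frozen components, so \cref{thm:sheffield} gives $\mu_\theta = \nu_{s(\theta)}$ for some slope function $s$. Since the mean slope is $(p_a,p_b,p_c)$, it suffices to show that $s$ is $\rho$-almost surely constant: one clean route is to observe that $s(\theta)$ is determined by the asymptotic behaviour of $\cT$, and appeal to a tail triviality statement for the wired UST on the T-graph (a 0-1 law that follows from the uniform crossing assumption together with standard UST arguments).

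The main obstacle I anticipate is precisely this last step. Spread-out is strong enough to prevent local freezing but does not, by itself, prevent a macroscopic random slope for $\phi(\cT)$. Establishing the required 0-1 law in the T-graph setting---without re-doing substantial parts of UST theory---is where I expect the real technical effort to lie; an alternative, to couple finite-volume approximations of $\nu$ and of $\phi(\cT)$ through \cref{thm:robustness}, trades this difficulty for the need to control $O(1)$ boundary height discrepancies, which itself requires the slopes to match and thus risks circularity.
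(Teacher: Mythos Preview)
Your argument for the spread-out property is essentially identical to the paper's: invoke \cref{lem:decoupling} to obtain the good set $\cE$ and the two-step sampling, then apply \cref{prop:non-concentration} on the inner wired UST and take a union bound. Your use of \cref{cor:translation_invariance} for translation invariance also matches exactly.

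The only divergence is in the final ergodicity step. The paper does not attempt a self-contained argument here at all: after obtaining translation invariance it simply states that ``the exact same proof as for Theorem 4.10 in \cite{BLRannex} shows that the ergodic decomposition has to be trivial,'' and leaves it at that (the densities $(p_a,p_b,p_c)$ are implicitly part of that cited result). So the obstacle you correctly flag---ruling out a non-trivial mixture of slopes---is handled in the paper by citation rather than by any new idea, and your worry about circularity or about having to redo UST tail-triviality is, from the paper's point of view, already discharged by \cite{BLRannex}. Your proposed route via a 0--1 law for the wired UST is a reasonable way to reconstruct what that reference presumably contains, but it is not needed here.
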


\begin{proof}
 We start by proving the spread-out property for the law of $\phi(\cT)$, which we call $\mu$. Fix $\ve > 0$ and let $\delta_0$ be chosen according to \cref{prop:non-concentration}. We also choose $R$ from \cref{lem:decoupling}, $\delta \leq \delta_0$ so that \cref{lem:decoupling} applies, and we let $\cE$ be the corresponding set of configuration. 
 
For any $M \in \cE$, we see that, with probability $1-\ve$, no branch intersect $B(0,1)$. Also, under the event that no branch intersect $B(0,1)$, we can apply \cref{prop:non-concentration} so overall, by union bound, we see that
\[
 \forall M \in \cE, \, \forall x, \, \bbP_{\pi_M} [ h(0) \in (x, x+1) ] \leq 2 \ve,
\]
and by construction
\[
 \bbP (\cE) \geq 1- \ve,
\]
which proves the spread out property.

Applying \cref{cor:translation_invariance}, we see that $\mu$ is translation invariant and is therefore a linear combination of the ergodic translation invariant Gibbs measures. The exact same proof as for Theorem 4.10 in \cite{BLRannex} shows that the ergodic decomposition has to be trivial.
\end{proof}

\begin{remark}
Comparing with Theorem 4.10 in \cite{BLRannex}, this is a strict enhancement because here we do not need to average over the parameter $\lambda$. Indeed in \cite{BLRannex} this averaging was needed to restore translation invariance while here it comes from the spread-out property and the robustness. Also, note that T-graphs are not translation invariant so the invariance is recovered only when going from the tree to the dimers.
\end{remark}

\subsection{Non-concentration for a single branch}\label{sec:single-branch}

From the previous section, we reduced the conditioned measure to a wired spanning tree measure on a random large domain. We will actually be able to prove that, uniformly over that domain, the height at $0$ is not concentrated.

The idea of the proof will be to decompose the walk at each scale similarly to \cite{BLR16} and to show that each scale can have an independent additive contribution to the winding. We will then conclude by the local CLT. Slightly more precisely, we will focus our attention on scales where it is easy to control that only finitely many pieces of random walk contribute to the winding, which we will call isolated scales.

\medskip

We start by setting up the definitions and notations for the scale decomposition. Let $r_i = e^{i}$ for $\im \leq i \leq \iM - 1$, where $\im$ will be specified later as the maximum index such that the RSW assumptions hold ($\im = \log \delta - C$ for an universal constant $C$) and $i_{\max}-1$ is the largest index such that $B(0,r_i) \subset D^\d$. We let $C_i$ be the circle of radius $r_i$ centred at $0$ for $\im  +1 \leq C_i < C_{\iM} -1$.

We inductively define a sequence of times $\tau_k$ and indexes $i(k)$ as follows. We let $\tau_0 = 0$ and by convention $i(0) =\im - 1$, then $\tau_1$ is the first crossing time of $C_{\im}$ and $i(1) = \im$. Having defined $\tau_k$ and $i(k)$, we let $\tau_{k+1}$ be the first crossing time of either $C_{i(k)-1}$ or $C_{i(k) +1}$ after $\tau_k$ and we let $i(k)$ be the index of the circle it crossed. Note that if $i(k) = \im$ we instead consider only the next crossing of $C_{\im +1}$. We also stop the random walk when it exits $D^\d$, calling this time $\tau_{k_{\max}}$ and setting by convention $i(k_{\max}) = i_{\max}$. When we speak of a \emph{crossing} without further specification, it will always refer to a time of the form $\tau_k$. Note also that by construction, $X_{\tau_k}$ is a point in $C_{i(k)}$ up to a discretisation error of size $O(\delta)$. We define $S := (X_{\tau_k})_{k\geq 0}$ the sequence of crossing positions. For any $j$ we also let $\cV_j$ be the sequence of crossings of the circle $C_j$, i.e $\cV_j = \{ k : i(k) = j \}$.

In this section, for a path $X_n$, we will write $X[a,b]$ 
for the union of all edges used between times $a$ and $b$ seen as closed sets in $\bbC$
or sometime with an abuse of notation $X[a,b]$ for the subpath between times $a$ and $b$.
 We will call a set of the form $X[\tau_k, \tau_{k+1}]$ an \emph{elementary piece of random walk}.

We want to condition on the sequence of crossing positions $\mathcal{S} := (X_{\tau_k})_{k \geq 0}$. For any $i_{\min} \leq i  < i_{\max}$, we set $\kappa_i = \max \{ k : i(k) = i \}$ and we say that scale $i$ is \emph{pre-isolated} if $\kappa_{i-1} = \kappa_i - 1 = \kappa_{i+1} -2$. In other word, a scale $i$ is pre-isolated if just before the last crossing of $C_i$ the walk was inside and, immediately after, the walk goes to $C_{i+2}$ and never comes back further than $C_{i + 1}$.

\begin{lemma}
Let $\cI$ be the set of pre-isolated scales which are also even. There exists constants $c, c' >0$ independent of $\delta$ such that
\[
\P \left(| \cI | \leq c(i_{\max} - i_{\min}) \right) \leq e^{- c'(i_{\max} - i_{\min}) } .
\]
\end{lemma}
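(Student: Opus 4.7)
The plan is to reformulate pre-isolation in terms of the time-reversed random walk, prove that pre-isolation at each scale has conditional probability at least $c_0 > 0$, and conclude via a Chernoff/Azuma-type inequality applied to a sequence of weakly dependent indicator variables.

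The first step is the reverse-time reformulation. Let $\tilde X_n := X_{N-n}$ be the time-reversal of the random walk (where $N$ is its exit time from $D^\d$), and let $\tilde\sigma_j$ be the first $\tilde\tau$-crossing index at which $\tilde X$ reaches scale $j$. A direct computation gives $\tilde\sigma_j = k_{\max} - \kappa_j$, so the pre-isolation condition $\kappa_{i-1} = \kappa_i - 1 = \kappa_{i+1} - 2$ becomes $\tilde\sigma_{i-1} = \tilde\sigma_i + 1 = \tilde\sigma_{i+1} + 2$: in the reverse walk, the first hits of $C_{i+1}, C_i, C_{i-1}$ occur at three consecutive crossing indices. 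Equivalently, define $T_j$ to be the event that immediately after first reaching scale $j$, the reverse walk's next two scale crossings are both inward (to $j-1$ and then $j-2$); one checks that $T_j$ holds iff scale $j-1$ is pre-isolated.

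The next step is the single-scale conditional estimate. By the strong Markov property of $\tilde X$ at the stopping time $\tilde\sigma_j$, combined with uniform crossing applied to the reverse walk (which inherits uniform crossing from the forward walk via standard potential-theoretic estimates on the adjoint chain), one obtains
\[
\P\bigl(T_j \mid \mathcal{F}_{\tilde\sigma_j}\bigr) \ge c_0
\]
for $j$ in the interior range, with $c_0 > 0$ depending only on $\alpha$ and uniformly in $\delta$ and in the graph. The point is simply that at such times the reverse walk sits on some point of $C_j$, and by uniform crossing it has positive probability to cross $C_{j-1}$ before $C_{j+1}$, and then $C_{j-2}$ before $C_{j-1}$ (no ``bounce back'' needed here, since being the first crossings of those circles is automatic).

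The final step is concentration. Restrict attention to odd $j$ so that the corresponding pre-isolated scale $i = j-1$ is even; then $(\mathbf{1}_{T_j})_{j \text{ odd}}$ is adapted to the filtration $(\mathcal{F}_{\tilde\sigma_j})_{j}$ and, by the single-scale estimate, stochastically dominates an i.i.d.\ Bernoulli$(c_0)$ sequence of length at least $(i_{\max}-i_{\min})/2 - O(1)$. A standard Chernoff bound then yields $|\cI| \ge c(i_{\max}-i_{\min})$ with probability at least $1 - e^{-c'(i_{\max}-i_{\min})}$. The principal technical obstacle is in the second step, namely establishing uniform crossing for the reverse chain; a robust alternative that avoids reversal entirely is to work with the forward walk and apply a last-exit decomposition at $\kappa_j$. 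This introduces additional bookkeeping because $\kappa_j$ is not a stopping time, but the conditional law of the post-$\kappa_j$ trajectory (a walk from $X_{\kappa_j}$ conditioned never to return to $C_j$) still satisfies analogous uniform-crossing estimates and gives the same conclusion.
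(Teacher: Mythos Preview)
Your reduction to a reverse-time reformulation is correct and attractive, but the proof has a genuine gap at the second step. The walk here lives on a T-graph, which is a \emph{directed} weighted graph, and is killed upon exit from $D^\d$. For such a chain the time-reversal $\tilde X_n = X_{N-n}$ is not a priori a Markov chain on the same graph (the reversed edges need not even be present), and even if one passes to the adjoint chain with respect to some reference measure, there is no ``standard'' result guaranteeing that it satisfies uniform crossing with the same constants. This is exactly the kind of statement that requires real work in the non-reversible setting; brushing it aside as potential-theoretic folklore is not adequate. Your strong-Markov computation at $\tilde\sigma_j$ therefore has no foundation as written.

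Your suggested alternative---a last-exit decomposition at $\kappa_j$ in forward time---is in fact the right idea, and it is essentially what the paper does, though organised differently. Rather than analysing the post-$\kappa_j$ trajectory directly (which, as you note, involves a walk conditioned never to return to $C_j$), the paper introduces a nested family $\cS^\ell$ recording crossings only of the circles $C_\ell,\dots,C_{i_{\max}-1}$, and reveals these for decreasing $\ell$. Conditionally on $\cS^{\ell+1}$, the walk decomposes into independent \emph{forward} bridges between prescribed endpoints, and the needed input is that such a bridge from $C_{i+1}$ to $C_{i+2}$ has uniformly positive probability to first dip down to $C_i$ (Lemma~4.6 in \cite{BLR16}). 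Iterating twice shows each even scale is pre-isolated with uniformly positive conditional probability given the coarser filtrations, and concentration follows. The advantage of this packaging is that all estimates are for forward random walks or bridges---objects for which the uniform-crossing machinery of \cite{BLR16} is already in place---so no reversal or adjoint chain is ever invoked.
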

\begin{proof}
This is essentially a special case of the proof of Lemma 4.12 in \cite{BLR16}. The idea is to discover the set $\cS$ is steps ``from the outside'' to be able to control its law. More precisely, for $i_{\min} \leq \ell < i_{\max}$, we let $\cS_\ell$ be defined similarly to $\cS$ but only considering the circles $C_\ell, \ldots, C_{i_{\max} -1}$. In other word, we define times $\tau_k^\ell$ and indices $i^\ell(k)$ as above but except that when $i^\ell(k) = \ell$, we set $\tau_{k+1}^\ell$ to be the first exit of $B(0, r_{\ell + 1})$ instead of the exit from the annulus. Note that $\cS^{i_{\max}-1} \subset  \cS^{i_{\max}-2} \subset \ldots \subset \cS^{i_{\min}} = \cS$.

We will condition successively of all $\cS^\ell$ with decreasing $\ell$ and show that, along that procedure, each scale has an independent positive probability to be pre-isolated. Note that conditionally on $\cS^\ell$, $X$ is given by independent pieces of random walks. Furthermore is is easy to see that $(X_{\tau_k})_{k \in \cV_i}$ is measurable with respect to $\cS^{i-1}$, therefore to see whether a scale $i$ is pre-isolated, it is enough to know $\cS^{i - 1}$.

Now fix $i$ even and condition on $\cS^{i+1}$. By Lemma 4.6 in \cite{BLR16}, the last piece of random walk starting from $C_{i+1}$ has a positive probability to hit $C_i$ before $C_{i+2}$. If this happens, then in $\cS^{i}$ the last visit to $C_{i+1}$ happens immediately after the last visit to $C_i$. Iterating this argument for $i-1$ and $i$, we see that in $\cS^{i-1}$, with positive probability, we have $\kappa^\ell_{i-1} = \kappa^\ell_i - 1 = \kappa^\ell_{i+1} -2$. As we noted above, this implies that $i$ is pre-isolated. 

We just showed that when we discover successively the $\cS^\ell$, pre-isolated scales can appear with positive probability conditionally independently at each step. The lemma then follows from an elementary concentration bound for iid variables.
\end{proof}

Now let us condition on the set $\cS$ and assumes that $| \cI | > c(i_{\max} - i_{\min})$.
Let us also condition on all the pieces of random walk corresponding to odd scales. We say that a pre-isolated scale $i$ is isolated if the piece of random walk $X[\tau_{\kappa_i -1}, \tau_{\kappa_i}]$ contains a cycle separating $0$ from infinity and if $X[\tau_{\kappa_i +1}, \tau_{\kappa_i+2}]$ does not intersects $B(0, e^{i + 6/7})$ (recall that by assumption, $X_{\tau_{\kappa_i+2}}$ in in $C_{i+2}$ so this is possible). By Lemma 4.4 and 4.5 in \cite{BLR16}, each pre-isolated scale has a positive probability to be isolated and in particular we have.

\begin{lemma}
let $\cI'$ be the set of isolated scales. There exists constants $c, c' >0$ independent of $\delta$ such that
\[
\P \left(| \cI' | \leq c(i_{\max} - i_{\min}) \right) \leq e^{- c'(i_{\max} - i_{\min}) } .
\]
\end{lemma}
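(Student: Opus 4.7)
The plan is to bootstrap from the previous lemma on pre-isolated scales. By that result it suffices to work on the event $E_0 = \{|\cI| > c_0(i_{\max} - i_{\min})\}$, whose complement has probability at most $e^{-c_0'(i_{\max}-i_{\min})}$, and to show that on $E_0$ a uniformly positive fraction of the pre-isolated even scales become isolated.

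First I would further condition on $\cS$ together with all elementary pieces $X[\tau_{k-1},\tau_k]$ ending at an odd-scale circle (i.e.\ with $i(k)$ odd). Conditionally on $\cS$ alone, the elementary pieces are mutually independent random walks in their respective annuli by the strong Markov property at the crossing times, so conditioning additionally on the odd-scale pieces leaves the even-scale pieces still mutually independent. For any pre-isolated even scale $i$, the isolation requirements bear only on $X[\tau_{\kappa_i-1},\tau_{\kappa_i}]$ and $X[\tau_{\kappa_i+1},\tau_{\kappa_i+2}]$, both of which end at an even-scale circle ($C_i$ and $C_{i+2}$ respectively) and are therefore still random after the conditioning. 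Since distinct pre-isolated even scales differ by at least two, these pairs of pieces are disjoint across $i \in \cI$, and the corresponding isolation events are genuinely conditionally independent.

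Next I would invoke Lemma 4.4 of \cite{BLR16} to bound below, uniformly in the endpoints prescribed by $\cS$, the probability that $X[\tau_{\kappa_i-1},\tau_{\kappa_i}]$ contains a loop around $0$ (an RSW-type loop creation estimate in the annulus between $C_{i-1}$ and $C_i$, depending only on the uniform crossing constants), and Lemma 4.5 of \cite{BLR16} to bound below the probability that $X[\tau_{\kappa_i+1},\tau_{\kappa_i+2}]$ reaches $C_{i+2}$ without entering $B(0,e^{i+6/7})$. Multiplying these two independent bounds yields a universal constant $p_0>0$ such that, conditionally on the frozen data, each pre-isolated even scale is isolated with probability at least $p_0$.

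On $E_0$ we then have at least $c_0(i_{\max}-i_{\min})$ conditionally independent Bernoulli trials of success probability at least $p_0$, and a Chernoff bound gives $|\cI'| \geq (p_0 c_0/2)(i_{\max}-i_{\min})$ except with probability $e^{-c''(i_{\max}-i_{\min})}$. Combining with the probability of $E_0^c$ and renaming constants yields the stated bound. The only subtle step is the scale-parity bookkeeping that guarantees the two pieces needed to verify isolation of an even scale are unfrozen and disjoint across different pre-isolated scales; once that is set up, everything reduces to a standard i.i.d.\ concentration argument.
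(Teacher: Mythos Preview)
Your proposal is correct and follows exactly the approach the paper takes: the paper simply states that, after conditioning on $\cS$ and on the odd-scale pieces, each pre-isolated even scale becomes isolated with positive probability by Lemmas~4.4 and~4.5 of \cite{BLR16}, and then appeals to concentration for independent Bernoulli variables. Your write-up just makes explicit the parity bookkeeping (that the two pieces $X[\tau_{\kappa_i-1},\tau_{\kappa_i}]$ and $X[\tau_{\kappa_i+1},\tau_{\kappa_i+2}]$ both end at even circles and are disjoint across different $i\in\cI$) that the paper leaves to the reader.
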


Now we condition additionally on all the pieces of random walk except those of the type $X[\tau_{\kappa_i}, \tau_{\kappa_i} +1 ]$ for $i \in I'$. 
The idea will be that, thanks to the assumptions on what happens just before and after each of the $\kappa_i$, the erasures of loops will happen independently for all $i \in \cI'$. Further the contribution of each scale $i \in \cI'$ is ``truly random'' so overall by the local CLT the sum will be spread out.

\begin{figure}\label{fig:paths}
\begin{center}
\def\svgwidth{0.4\textwidth}
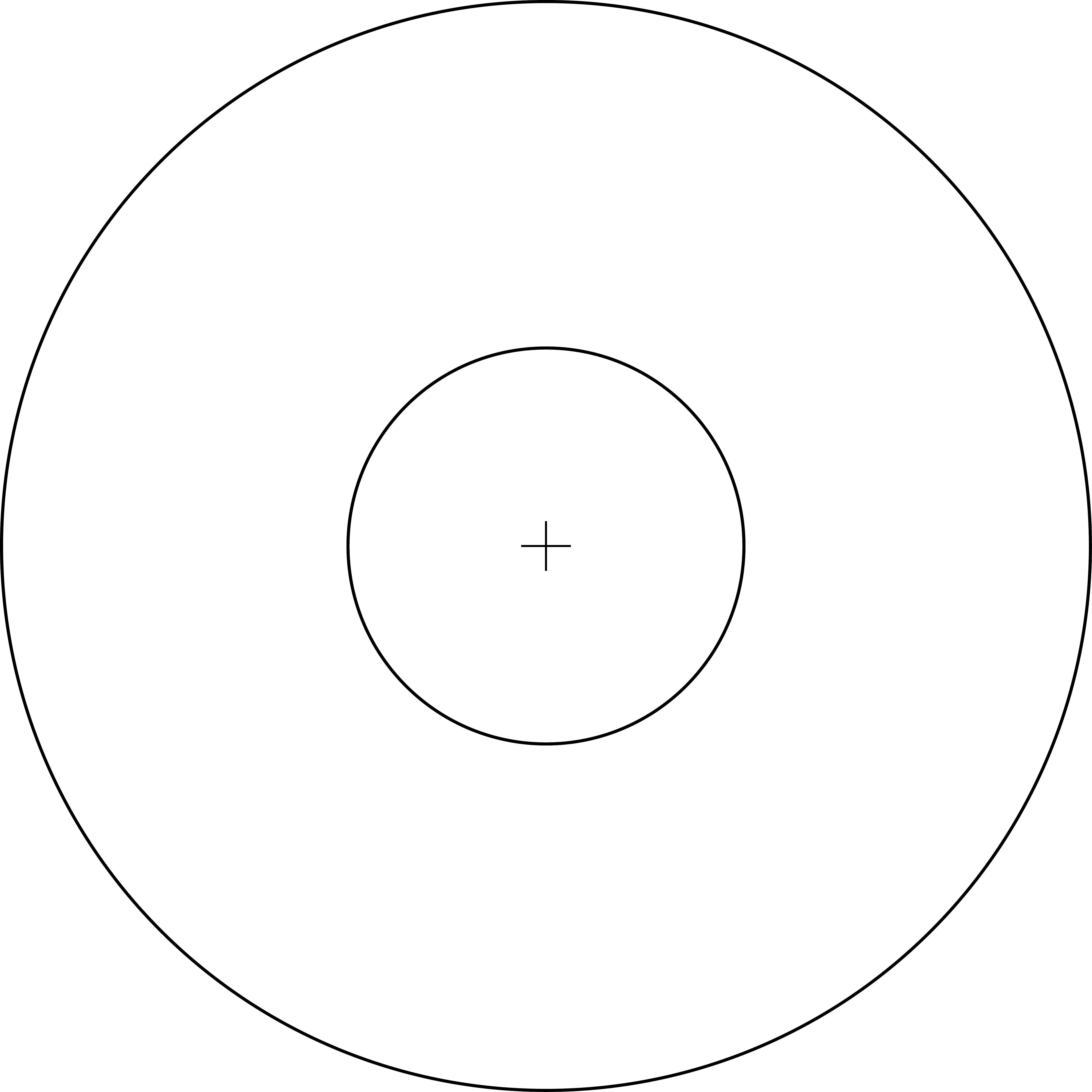
\caption{A schematic representations of the paths $\gamma_1$ (blue) and $\gamma_2$ (dotted red).}
\end{center}
\end{figure}

To formalise this idea and to go to a classical setting of an iid sum, we focus on a single scale $j$ and we introduce a coupling of two copies of the piece of random walk, which we call $X^1$ and $X^2$. The aim will be to obtain a coupling between the two loop-erasures such that they differ by one turn inside scale $j$ but agree everywhere outside. Note that we change the convention for the name of a scale to avoid conflict with the complex $i$. Without loss of generality, by rotating and symmetrising the lattice, we can assume that $\arg(X_{\tau_{\kappa_j}}) = 0$ and $\theta := \arg(X_{\tau_{\kappa_j + 1}}) \in [0, \pi]$. In this proof, we say that a walk $X$ \emph{follows} a curve $\gamma$ if their $L^\infty$ norm up to parametrisation is at most $e^j/12$. We define $\gamma^1$  and $\gamma^2$ as
\[
\gamma^1(t) =
\begin{cases}
 e^{j + t/2} &\text{for $t \in[0,1]$}\\
 e^{j + 1/2 + i\theta (t -1)} &\text{for $t \in[1,2]$}\\
e^{j + i \theta + (t-1)/2} &\text{for $t \in[2,3]$}\\
\end{cases}
,\quad
\gamma^2(t) =
\begin{cases}
 e^{j + t/3} & \text{for $t \in[0,1]$} \\
 e^{j + 1/3 - i \pi (t -1)} &\text{for $t \in[1,2]$} \\
e^{j + (t-1)/3 - i \pi } &\text{for $t \in[2,3]$} \\
e^{j + 2/3 -i\pi-i(\pi-\theta)(t-3)  } &\text{for $t \in[3,4]$} \\
e^{j + (t -2)/3 +i\theta  } &\text{for $t \in[4,5]$} \\
\end{cases}
\]

\begin{lemma}
Let $X$ be a piece of random walk, there exists a constant $c$ depending only on the constant in the uniform crossing such that
$
 \P(X \text{ follows } \gamma^1) > c
$ and $
 \P(X \text{ follows } \gamma^2) > c
$.
Further, conditioned on the first hitting of the circle of radius $e^{j + 1/6}$ and on following $\gamma^1$ or $\gamma^2$, the total variation distance between the laws of the first hitting point of the circle of radius $e^{j + 2/3}$ is upper bounded by $1 - c$.
\end{lemma}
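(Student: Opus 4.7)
For the first claim, the plan is a standard uniform-crossing argument. Both $\gamma^1$ and $\gamma^2$ are concatenations of $O(1)$ elementary pieces, each being either a radial segment between two concentric circles at a fixed angle, or an angular arc at a fixed radius (further subdivided if necessary so that each subtends at most, say, $\pi/2$). Together with its $e^j/12$-neighbourhood, each such piece sits inside an annular sector of bounded log-aspect ratio; under the change of variables $z \mapsto \log z$ it becomes a rectangle of bounded Euclidean aspect ratio, to which the uniform crossing assumption applies directly. This yields a uniform positive lower bound on the probability that the walk crosses the tube from one end to the other while staying inside. Composing the $O(1)$ successive crossings via the strong Markov property (placing the starting ball of each tube inside the exit ball of the previous one) then produces $\P(X \text{ follows } \gamma^k) > c$ for some universal $c > 0$ depending only on the uniform crossing constants.

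For the second claim, I would first note that conditioning on the first hit of $C_{e^{j+1/6}}$ together with following $\gamma^k$ forces the walk to first reach $C_{e^{j+1/6}}$ at a point near angle $0$, since $\gamma^1$ and $\gamma^2$ coincide as a radial segment up to that radius. The key observation is then the flexibility afforded by the $e^j/12$ tolerance near the target circle $C$ of radius $e^{j+2/3}$: the curve $\gamma^2$ contains an arc lying exactly on $C$ and sweeping from angle $-\pi$ to angle $\theta - 2\pi \equiv \theta$, while $\gamma^1$ reaches $C$ for the first time at angle $\theta$. Using one-sided uniform-crossing bounds to force the walk to stay on the inner side of $C$ during the radial segment $\gamma^2(t)$, $t\in[2,3]$, and then along the entire arc, and to only cross $C$ at the very end of the arc, I would show that with positive probability the $\gamma^2$-conditioned walk first hits $C$ at angle $\approx \theta$, in the same small target disk as the $\gamma^1$-conditioned walk. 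Combined with a Harnack-type comparison this produces an overlap between the two hitting laws of mass at least $c$, hence $\mathrm{TV} \leq 1 - c$.

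The main obstacle is identifying the correct coupling in the second claim: naively, the first hits of $C$ under $\gamma^1$ and $\gamma^2$ sit at very different angles ($\theta$ versus $-\pi$), which would make the two conditional laws essentially singular. The resolution is to exploit the $e^j/12$ slack so that the $\gamma^2$-conditioned walk stays just inside $C$ throughout the angular sweep and crosses only at the very end, aligning its first hit with that of the $\gamma^1$-conditioned walk. Making this precise will require careful one-sided RSW-style estimates (to force the walk to the inner side during the radial segment, and to keep it there during the arc) and a Harnack-type comparison on a small target disk around $e^{j+2/3}e^{i\theta}$ to compare the two conditional hitting densities.
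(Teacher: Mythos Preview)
Your argument for the first claim matches the paper's: finitely many applications of uniform crossing (Lemma~4.4 in \cite{BLR16}) chained via the strong Markov property, plus a final step to exit the annulus without backtracking.

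For the second claim your route diverges from the paper's. The paper does not try to force the $\gamma^2$-conditioned walk to first hit the circle $C$ of radius $e^{j+2/3}$ near angle $\theta$. Instead it fixes any vertex $x$ within $e^j/24$ of $e^{j+2/3+i\theta}$ and compares the two conditional hitting laws at $x$ directly via Bayes' rule:
\[
\frac{\bbP(X_{\tau_B}=x\mid F^1)}{\bbP(X_{\tau_B}=x\mid F^2)}
=\frac{\bbP(F^1\mid X_{\tau_B}=x)\,\bbP(F^2)}{\bbP(F^2\mid X_{\tau_B}=x)\,\bbP(F^1)},
\]
all probabilities being further conditioned on $X_{\tau_A}$ and on the endpoint $X_{\tau_{\kappa_j+1}}$. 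The point is that conditioning on $X_{\tau_B}=x$ splits the piece of walk into two independent bridges, and since both $\gamma^1$ and $\gamma^2$ pass through the ball $B(x,e^j/12)$, each of $\bbP(F^k\mid X_{\tau_B}=x)$ is bounded below by exactly the same uniform-crossing argument as in the first part of the lemma. This bounds the ratio of densities above and below by universal constants; since the unconditional hitting measure already puts mass $\geq c$ on the arc near $e^{j+2/3+i\theta}$ (again by uniform crossing), the overlap of the two conditional laws is at least $c$.

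Your approach --- restricting to the sub-event where the walk stays strictly inside $C$ along the entire arc portion of $\gamma^2$, and then invoking a Harnack-type comparison on a small target disk --- can be made to work, but it is more laborious: you need one-sided RSW estimates to keep the walk on the inner side of $C$ over an arc of angular length up to $\pi$, and the ``Harnack-type comparison'' you defer is, once unpacked, precisely the Bayes computation above (a bound on the Radon--Nikodym derivative of one conditional hitting law with respect to the other). The paper's route shortcuts your construction by observing that this density comparison at a single point reduces immediately to a second application of the first part of the lemma, so no separate inner-side forcing or Harnack input is required.
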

\begin{proof}
The bounds on the probability to follow a path clearly follows from finitely many applications of Lemma 4.4 in \cite{BLR16}, with an application of equation 4.4 in \cite{BLR16} to show that, once the path has arrived close enough to the boundary, it can exit without backtracking.

For the bound on the law of the exit point, let $\tau_A$ be the exit time of $B(0, e^{j + 1/6})$ and $\tau_B$ be the first exit time of $B(0, e^{j + 2/3})$ and let also $F^1$ and $F^2$ be respectively the events that $X$ follows $\gamma_1$ and $\gamma_2$. Let $x$ be any point at distance at most $e^{j}/24$ of $e^{j + 2/3 + i \theta}$ and such that $\bbP[X_{\tau_B} = x] > 0$. Also let us condition on $X_{\tau_A}$ once and for all without further mention of it since it does not play any role. 

Note that we have
\begin{multline*}
	\bbP[X_{\tau_B} = x | F^1, X_{\tau_{\kappa_j + 1}} = y] \\
	= \frac{ \bbP(F^1 | X_{\tau_B} = x, X_{\tau_{\kappa_j + 1}}=y )  \bbP(X_{\tau_{\kappa_j + 1}}=y | X_ {\tau_B}=x) \bbP(X_{\tau_B} = x) }{ \bbP(F^1 | X_{\tau_{\kappa_j + 1}}=y) \bbP(X_{\tau_{\kappa_j + 1}}=y) }
\end{multline*}
and similarly for $F^1$, therefore
\[
\frac{\bbP[X_{\tau_B} = x | F^1, X_{\tau_{\kappa_j + 1}} = y]}{\bbP[X_{\tau_B} = x | F^2, X_{\tau_{\kappa_j + 1}} = y]} = \frac{\bbP(F^1 | X_{\tau_B} = x, X_{\tau_{\kappa_j + 1}}=y )   \bbP(F^2 | X_{\tau_{\kappa_j + 1}}=y)}{ \bbP(F^2 | X_{\tau_B} = x, X_{\tau_{\kappa_j + 1}}=y ) \bbP(F^1 | X_{\tau_{\kappa_j + 1}}=y). }
\]
We already argued that $\bbP(F^{1/2} | X_{\tau_{\kappa_j + 1}}=y)$ is lower bounded independently of $\delta$. For the other one, note that, under the conditioning, the piece of random walk is made of two independent random walks conditioned on their exit point from an annulus so $\bbP(F^{1/2} | X_{\tau_{\kappa_j + 1}}=y, X_{\tau_B} = x)$ is also lower bounded independently of $\delta$ for the same reason as above. Finally it is easy to see by Lemma 4.4 in \cite{BLR16} that $\bbP(X_{\tau_B} \in B(e^{j + 2/3 +i \theta}, e^j/24))$ is also lower bounded independently of $\delta$, which concludes the proof. 
\end{proof}

Thanks to the the lemma, we can construct a coupling of two pieces of random walk $X^1$ and $X^2$ such that the following event $E$ has positive probability. With an abuse of notation, it will be convenient to think of $X^1$ and $X^2$ as full trajectories of a random walk that agree everywhere except at scale $j$. Let $\tau_A$ be the first hitting time of the circle of radius $e^{j + 1/6}$ and $\tau_B$ be the first hitting time of the circle of radius $e^{j + 4/6}$. We set 
\begin{multline*}
E = \{X^1[\tau_{\kappa_j}, \tau_A ] = X^2[\tau_{\kappa_j}, \tau_A]\} \\
\cap \{ X^1[\tau_B, \tau_{\kappa_j +1}] = X^1[\tau_B, \tau_{\kappa_j +1}] \} \\
\cap \{ X^1 \text{ follows } \gamma^1 \} \cap \{ X^2 \text{ follows } \gamma^2 \}. 
\end{multline*}

Here it would be natural to just consider the standard forward loop erasures of $X^1[0,\tau_{\kappa_j +1}] $ and $X^2[0,\tau_{\kappa_j +1}] $ but this doesn't work since we cannot control that on the event $E$ these loop-erasures agree near $C_{j+1}$. To go around this issue, we use the fact that the law of the forward loop-erasure and the law of the backward loop-erasure are the same and we actually erase loops in a ``mixed'' way. 

\begin{lemma}\label{lem:LE}
Let $X$ be a random walk on a arbitrary graph, possibly conditioned on its end point. Let $T$ be a stopping time for the walk and $T_{\max}$ be its final time. Let $Y$ be defined from $X$ by the following loop-erasure procedure.
\begin{itemize}
\item First consider $X[0, T]$ and let $Y^T$ denote its forward loop-erasure. Parametrise $Y^T$ from $X_0$ to $X_T$ as $Y^T_s$,
\item then find $S = \min \{ s \geq 0 : Y^T_s \in X[T, T_{\max}] \}$ and $\tau = \max \{ t \geq 0 : X_t = Y_S^T \}$,
\item finally add to $Y^T[0, S]$ the backward loop-erasure of $X[\tau, T_{\max}]$.
\end{itemize}
Then $Y$ has the same law as the forward loop-erasure of $X$.
\end{lemma}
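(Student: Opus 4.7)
The plan is to build an auxiliary walk $\tilde X$ having the same law as $X$ and such that $\overset{\rightarrow}{E}(\tilde X) = Y$ \emph{pointwise}, from which the equality in law $Y \stackrel{d}{=} \overset{\rightarrow}{E}(X)$ is immediate.

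The key deterministic observation comes from the ``last visit'' description of the forward loop erasure: $\overset{\rightarrow}{E}(W)$ passes through each of its vertices at the last time $W$ visits that vertex and leaves it along the corresponding last-exit edge. Using this, first I would note that by minimality of $S$ the vertices of $Y^T[0, S)$ are absent from $X[T, T_{\max}]$, hence their last visits in $X$ coincide with their last visits in $X[0, T]$; and that $\tau$ is by construction the last visit of $v := Y^T_S$ in all of $X$, so every vertex on $\overset{\rightarrow}{E}(X)$ after $v$ has its last visit inside $[\tau, T_{\max}]$. Assembling these two facts yields the clean deterministic decomposition
\[
\overset{\rightarrow}{E}(X) = Y^T[0, S] \cdot \overset{\rightarrow}{E}(X[\tau, T_{\max}]).
\]

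Next I would set $\tilde X := X[0, \tau] \cdot \phi^{-1}(X[\tau, T_{\max}])$, where $\phi$ is the bijection provided by \cref{lem:back_loop_erasure}. Because $\phi^{-1}$ preserves endpoints and the multiset of oriented edges (hence also the multiset of vertex visits), $\phi^{-1}(X[\tau, T_{\max}])$ still starts at $v$, ends at $X_{T_{\max}}$, and visits $v$ only once; in particular the vertex set of $\tilde X[T, T_{\max}]$ equals that of $X[T, T_{\max}]$, so $Y^T, S, v$ and $\tau$ are unchanged when $X$ is replaced by $\tilde X$. Applying the decomposition above to $\tilde X$ and using $\overset{\rightarrow}{E}(\phi^{-1}(W)) = \overset{\leftarrow}{E}(W)$ (the defining property of $\phi$) then gives
\[
\overset{\rightarrow}{E}(\tilde X) = Y^T[0, S] \cdot \overset{\leftarrow}{E}(X[\tau, T_{\max}]) = Y.
\]
Finally, conditionally on $X[0, \tau]$ (and on $X_{T_{\max}}$ if $X$ is conditioned on its endpoint), the tail $X[\tau, T_{\max}]$ is distributed as a random walk from $v$ to $X_{T_{\max}}$ on $G \setminus \{v\}$; since the probability of any such path is a product of one-step transitions, it depends only on the multiset of oriented edges and the endpoints, so $\phi^{-1}$ is measure-preserving on the relevant set of paths. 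This gives $\tilde X \stackrel{d}{=} X$ and finishes the proof.

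The main (mild) obstacle is the last step, the identification of the conditional law of $X[\tau, T_{\max}]$: because $\tau$ is a last-exit time and not a stopping time, one cannot invoke the strong Markov property directly and has to go through the standard last-exit decomposition (conditioning on the value of $\tau$ and the past up to $\tau$) to check that the tail is a walk of the type to which \cref{lem:back_loop_erasure} applies.
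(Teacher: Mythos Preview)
Your proof is correct and follows essentially the same route as the paper's: both rest on the deterministic decomposition $\overset{\rightarrow}{E}(X)=Y^T[0,S]\cdot\overset{\rightarrow}{E}(X[\tau,T_{\max}])$ and then reduce to the equality in law of the forward and backward loop-erasures of the tail $X[\tau,T_{\max}]$ via \cref{lem:back_loop_erasure}. The only difference is packaging: the paper identifies the conditional law of $X[\tau,T_{\max}]$ as a random walk (on the graph with boundary $Y^T[0,S]$, conditioned on its endpoint) and quotes \cref{lem:back_loop_erasure} directly, while you make the bijection $\phi$ explicit by building $\tilde X$ and checking that $\tau$, $S$, $Y^T$ are preserved and that $\phi^{-1}$ is measure-preserving on the relevant set of tails---this is a little more work but also pins down precisely the ``last-exit'' issue that the paper leaves implicit.
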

\begin{proof}
First note that if we were adding at the end the forward loop-erasure of $X[\tau, T_{\max}]$ instead of its backward loop-erasure, we would obtain exactly the forward erasure of the whole path $X$. It is therefore enough to show that the forward and backward loop-erasures of $X[\tau, T_{\max}]$ have the same law.

Now remark that $X[\tau, T_{\max}]$ is a simple random walk on a graph with a boundary (made of $Y^T[0, S]$ and the initial boundary) conditioned on its end-point. Indeed since $T$ is a stopping time $X[T, T_{\max}]$ is a simple random walk so $X[\tau, T_{\max}]$ is the law of a random walk after its last intersection with a fixed path. From \cref{lem:back_loop_erasure}, the law of the forward and backward loop-erasures of such a path are equal.
\end{proof}

Back to our problem, we define $Y^1(j+1)$ and $Y^2(j+1)$ by erasing loops as above from $X^1$ and $X^2$, using $\tau_A$ as the stopping time. Note that this can be seen as a coupling between copies of the law of the forward loop-erasures of $ X[0, \tau_{\kappa_j + 1}]$. 

\begin{lemma}
Recall that $j$ is assumed to be an isolated scale and assume that $X^1$ and $X^2$ agree everywhere outside of the piece at scale $j$. When the event $E$ occurs, the paths $Y^1(j+1)$ and $Y^2(j+1)$ satisfy, 
$$W(Y^2(j+1), 0) = W(Y^1(j+1), 0) + 2\pi,$$
and furthermore $Y^1$ and $Y^2$ agree everywhere after their first exit of $B(0, e^{j + 5/6})$.
\end{lemma}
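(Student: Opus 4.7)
The plan is to apply the mixed loop-erasure of \cref{lem:LE} with stopping time $T = \tau_A$. On the event $E$, $X^1$ and $X^2$ agree on $[0, \tau_A]$, so the forward loop-erasure $Y^T := \vec E(X^1[0, \tau_A]) = \vec E(X^2[0, \tau_A])$ is a common simple path from $0$ to $X_{\tau_A}$, and the mixed LE of $X^i$ reads $Y^T[0, S_i]$ concatenated with $\overleftarrow E(X^i[\tau_i, T_{\max}])$ for some $S_i, \tau_i$ determined by $X^i[\tau_A, T_{\max}]$.

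The first step is to verify that $Y^T_{S_i} = X_{\tau_A}$ (the endpoint of $Y^T$) and $\tau_i = \tau_A$ for both $i$, so that the forward part of the mixed LE reduces to the common $Y^T$. I would argue this using the isolation assumptions: the cycle $\mathcal L$ around $0$ contained in $X[\tau_{\kappa_j-1}, \tau_{\kappa_j}] \subset \{e^{j-2} < |z| < e^j\}$, and the post-isolation fact that after $\tau_{\kappa_j + 1}$ the walk stays in $B(0, e^{j+6/7})^c$. Combined with the description of $\gamma^i$---whose image lies in the annulus $\{e^j \le |z| \le e^{j+1}\}$ and which, parametrised forward from $\tau_A$, moves monotonically outward in radius---this should give a topological argument showing that $Y^T$ and $X^i[\tau_A, T_{\max}]$ meet only at $X_{\tau_A}$, and that $\gamma^i$ does not revisit $X_{\tau_A}$. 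I expect this to be the main obstacle of the argument because one has to rule out parasitic intersections between $Y^T$, which a priori could reach any scale visited by $X[0, \tau_{\kappa_j - 1}]$, and the far tail $X^i[\tau_{\kappa_j + 1}, T_{\max}]$.

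Next I would analyse the backward loop-erasure of the tail $X^i[\tau_A, T_{\max}] = M^i + C$, where $M^i := X^i[\tau_A, \tau_B]$ is the differing middle piece (following the middle of $\gamma^i$) and $C := X^i[\tau_B, T_{\max}]$ is the shared outer piece. The key geometric observation is that $M^i \subset B(0, e^{j+2/3})$ while $C \subset B(0, e^{j+6/7})^c$, so $M^i$ and $C$ are disjoint as subsets of the plane and in particular $M^i \subset B(0, e^{j+5/6})$ and $C \subset B(0, e^{j+5/6})^c$. Reversing the tail and applying forward LE, one starts in $C^{\mathrm{rev}}$ (common to both couplings) and then enters $M^{i, \mathrm{rev}}$; since no vertex is visited in both $M^i$ and $C$, no loop erased by the forward LE on the reversed tail can straddle the boundary of $B(0, e^{j+5/6})$. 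Hence the restriction of the backward LE to times when the reversed walk sits in $C^{\mathrm{rev}}$ is determined solely by $C$, and the mixed LEs $Y^1$ and $Y^2$ agree everywhere outside $B(0, e^{j+5/6})$, which is the second assertion.

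Finally, for the winding, the common $Y^T$ and the common outer part of the backward LE contribute identically, so $W(Y^2, 0) - W(Y^1, 0)$ comes entirely from the inner part of the backward LE, which traces through $M^i$ plus a bounded part of $C$ inside $B(0, e^{j+5/6})$. A standard fact that the loop-erasure of a random walk closely following a simple arc remains close to that arc identifies the winding of this inner piece with the winding of $\gamma^i$ restricted to its middle, up to an $o(1)$ error. Since $\gamma^2$ is designed to wind exactly once more around $0$ than $\gamma^1$, the desired identity $W(Y^2, 0) = W(Y^1, 0) + 2\pi$ follows.
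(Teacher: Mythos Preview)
Your overall architecture (mixed loop-erasure at $T=\tau_A$, common forward part, compare the backward parts) matches the paper, but several of the concrete claims you rely on are wrong, and the ``main obstacle'' you identify is a phantom.

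First, $Y^i(j+1)$ is by definition the mixed loop-erasure of $X^i[0,\tau_{\kappa_j+1}]$, so $T_{\max}=\tau_{\kappa_j+1}$. There is no ``far tail'' $X^i[\tau_{\kappa_j+1},T_{\max}]$ to worry about, and the isolation condition on $B(0,e^{j+6/7})$ concerns the \emph{next} piece $X[\tau_{\kappa_j+1},\tau_{\kappa_j+2}]$, which does not enter the construction of $Y^i(j+1)$ at all. Consequently your claim $C=X^i[\tau_B,T_{\max}]\subset B(0,e^{j+6/7})^c$ is simply false: $C$ starts at radius $e^{j+2/3}$ and follows the end of $\gamma^i$ within tolerance $e^j/12$, so it can dip back inside $B(0,e^{j+2/3})$. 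The asserted disjointness of $M^i$ and $C$ therefore has no justification, and your decomposition of the backward loop-erasure collapses.

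Second, you do not need (and cannot expect) $S_i$ to be the endpoint of $Y^T$ and $\tau_i=\tau_A$. What the cycle in $X[\tau_{\kappa_j-1},\tau_{\kappa_j}]$ actually buys is that $Y^A:=\overset{\rightarrow}{E}(X[0,\tau_A])$ lies entirely in $B(0,e^{j+1/6})$; combined with the fact that $X^i$ after $\tau_B$ stays (up to tolerance) outside that ball, this yields only the weaker statement $\tau_A\le\tau_{S^i}\le\tau_B$. One then lets $Y^B$ be the common backward loop-erasure of $X^i[\tau_B,\tau_{\kappa_j+1}]$ and argues, using that $\gamma^i$ is radially monotone on its last segment, that $Y^B$ never returns to $B(0,e^{j+2/3})$ after first exiting $B(0,e^{j+5/6})$. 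Appending $X^i[\tau_{S^i},\tau_B]\subset B(0,e^{j+2/3})$ can then only modify $Y^B$ before that first exit, which gives the agreement statement without any disjointness of $M^i$ and $C$.

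Finally, your winding argument via ``$o(1)$ error'' is unnecessary and slightly off. Since $Y^1$ and $Y^2$ share both endpoints, $W(Y^2,0)-W(Y^1,0)\in 2\pi\bbZ$. The clean way to pin down the integer is to note that all loops erased when passing from $Y^A\cup X^i[\tau_A,\tau_{\kappa_j+1}]$ to $Y^i$ lie within $e^j/12$ of $\gamma^i$ in the annulus $A(0,e^j,e^{j+1})$ and hence do not surround $0$; thus $W(Y^i,0)=W(Y^A\cup X^i[\tau_A,\tau_{\kappa_j+1}],0)$, and the right-hand sides differ by exactly $2\pi$ by the explicit design of $\gamma^1,\gamma^2$.
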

\begin{proof}
Let us introduce $\tau_A^1, \tau_B^1$ and $\tau_A^2, \tau_B^2$ as respectively the first hitting times of the circles of radii $e^{j + 1/6}$ and $e^{j + 2/3}$ for $X^1$ and $X^2$.

Since $X^1$ and $X^2$ agree until $\tau^1_A = \tau^2_A$, their loop erasures agree so let us call it $Y^A$. Since $X^1[\tau_{\kappa_j -1}, \tau_{\kappa_j}]$ contains a loop inside $C_j$, and by construction of $\tau_A$, it is easy to see that $Y^A$ is a simple path in $B(0, e^{j + 1/6})$ from the center to the boundary. 

Let us introduce $S^1\min \{ s \geq 0 : Y^A_s \in X[\tau_A^1, \tau_{\kappa_j + 1}] \}$, $\tau^1_S = \max \{ t \geq 0 : X_t = Y_S^A \}$ and $s^1_{\max}$ the maximal index of $Y^1$. We also define $S^2$, $\tau^2_S$ and $s^2_{\max}$ similarly. By \cref{lem:LE}, we see that $Y^1[S^1, s^1_{\max}]$ is the backward loop erasure of $X[\tau_S^1,\tau_{\kappa_j + 1}]$, and similarly for $Y^2$.

Since $X^1$ follows $\gamma_1$, it is easy to see that $\tau_S^1 \leq \tau_B^1 \leq \tau_{\kappa_j + 1}$ so we can decompose the loop-erasure around time $\tau_B$. We write $Y^B$ for the backward loop erasure of $X^1[\tau_B^1, \tau_{\kappa_j + 1}]$ which by assumption is equal to the backward loop erasure of $X^2[\tau_B^2, \tau_{\kappa_j + 1}]$. It is easy to see that $Y^B$ does not return to $B(0, e^{j + 2/3})$ after its first exit of $B(0, e^{j + 5/6})$. Since $X^1[\tau^1_S, \tau^1_B]$ is a path in $B(0, e^{j + 2/3})$, the loops created by adding $X^1[\tau^1_S, \tau^1_B]$ to $Y^B$ cannot erase any part of $Y^B$ after its first exit of $B(0, e^{j + 5/6})^c$ and in particular $Y^1$ agrees with $Y^B$ after their first exit of $B(0, e^{j + 5/6})^c$. The same is true for $X^2$ which concludes the first part of the proof.

%
%
%
%
%
%
%
%
%

For the second part, first note that $Y^1$ and $Y^2$ are path with the same initial and ending points so their winding can only differ by a multiple of $2\pi$. Also each of them is obtained by erasing loops from $Y^A \cup X[\tau_A, \tau_{\kappa_j +1} ]$. It is easy to see that all the erased loops will be in $A(0, e^j, e^{j+1})$ and that $\Big(Y^A \cup X^{1/2}[\tau_A, \tau_{\kappa_j +1} ] \Big) \cap A(0, e^j, e^{j+1}) $ is a set at Hausdorff distance less than $e^{j}/12$ of $\gamma^{1/2}$. In particular $Y^A \cup X^{1/2}[\tau_A, \tau_{\kappa_j +1} ] $ does not contain any loop surrounding zero and in particular all the loops erased do not surround zero. As a consequence, $W(Y^1, 0) = W( Y^A \cup X^1[\tau_A, \tau_{\kappa_j +1} ], 0)$ and similarly for $X^2$. The construction was set up so that $W(Y^A \cup X^1) = W(Y^A \cup X_2) +2 \pi$ which concludes.
\end{proof}

\begin{corollary}\label{cor:contribution}
Let $j$ be an isolated scale, there exists a coupling of $\tilde X^1$ and $\tilde X^2$ two copies of the last piece of random walk at scale $j$ such that if $Y^1$ and $Y^2$ denote the full (forward) loop-erasures of the random walk, on a event $E$ of positive probability,
$$W(Y^2, 0) = W(Y^1, 0) + 2\pi.$$
\end{corollary}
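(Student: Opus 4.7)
The plan is to bootstrap the preceding lemma---which controls the intermediate loop-erasures $Y^1(j+1), Y^2(j+1)$ up to time $\tau_{\kappa_j+1}$---to the full forward loop-erasures by using the isolated-scale assumption to ensure that the random-walk tail past $\tau_{\kappa_j+1}$ cannot interfere. First, I would extend the coupling of $X^1$ and $X^2$ by letting them share a common continuation $X[\tau_{\kappa_j+1}, T_{\max}]$, so that the two walks agree outside the scale-$j$ piece $[\tau_{\kappa_j}, \tau_{\kappa_j+1}]$; the event $E$ still has positive probability. Because $j$ is isolated and $\kappa_{j+1} = \kappa_j + 2$, this common tail stays outside $B(0, e^{j+6/7})$: on $[\tau_{\kappa_j+1}, \tau_{\kappa_j+2}]$ by the isolated-scale assumption itself, and after $\tau_{\kappa_j+2}$ because the walk never again crosses $C_{j+1}$.

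Next, I would apply \cref{lem:LE} with $T = \tau_A$ to each full walk, producing a sample $Y^i$ of the full forward loop-erasure of the form
\[
Y^i = Y^A[0, S^i] \cup \overset{\leftarrow}{E}(X^i[\tau^i_S, T_{\max}]).
\]
Since $Y^A \subset B(0, e^{j+1/6})$ is disjoint from the tail (which lies in $B(0, e^{j+6/7})^c$), the defining minimum and maximum for $S^i$ and $\tau^i_S$ are unaffected by extending the time horizon; these quantities therefore agree with the values already analysed in the previous lemma for the prefix $[0, \tau_{\kappa_j+1}]$. In particular the initial part $Y^A[0, S^i]$, lying in $B(0, e^{j+1/6}) \subset B(0, e^{j+5/6})$, contributes nothing outside that ball.

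The key step is to show that on $E$ the two paths $\overset{\leftarrow}{E}(X^i[\tau^i_S, T_{\max}])$ agree outside $B(0, e^{j+5/6})$ and that, inside, their windings around $0$ differ by $2\pi$. I would repeat the argument of the preceding lemma almost verbatim with $X^i[\tau^i_S, \tau_{\kappa_j+1}]$ replaced by $X^i[\tau^i_S, T_{\max}]$. Writing the backward loop-erasure as the time-reversal of the forward loop-erasure of the reversed walk, the common segment $[\tau_B, T_{\max}]$ produces a single simple path $\tilde Y^B$, playing the role of $Y^B$, which does not return to $B(0, e^{j+2/3})$ after its first exit of $B(0, e^{j+5/6})$: the outward radial piece of $\gamma^i$ on $[\tau_B, \tau_{\kappa_j+1}]$ is monotone in radius up to the $e^j/12$ tolerance, while the tail sits in $B(0, e^{j+6/7})^c$, so any loops it forms with the outward radial piece live in the annulus $B(0, e^{j+1}) \setminus B(0, e^{j+6/7})$, strictly outside $B(0, e^{j+5/6})$. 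The divergent middle piece $X^i[\tau^i_S, \tau_B]$ stays inside $B(0, e^{j+2/3} + e^j/12) \subset B(0, e^{j+5/6})$ on $E$, so the loops it creates with $\tilde Y^B$ only affect the portion inside that ball. The topological-winding computation at the end of the previous lemma then gives $W(Y^2, 0) - W(Y^1, 0) = 2\pi$.

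The main obstacle is establishing the non-return property for $\tilde Y^B$: one must verify that extending the loop-erasure to include the common tail does not create new returns into $B(0, e^{j+2/3})$ after first exit of $B(0, e^{j+5/6})$. The isolated-scale condition is exactly what is needed, because by keeping the tail safely outside $B(0, e^{j+6/7})$ it confines any interaction between tail and middle to the annulus $B(0, e^{j+1}) \setminus B(0, e^{j+6/7})$, so the inner portion of $\tilde Y^B$ inherits the same non-return property established in the previous lemma for $Y^B$.
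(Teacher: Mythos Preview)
Your approach is correct and is exactly the argument the paper leaves implicit (the corollary is stated without proof, as an immediate consequence of the preceding lemma). Applying \cref{lem:LE} with $T=\tau_A$ to the full walk, and then rerunning the geometric argument of the previous lemma with the common tail appended, is the natural way to pass from $Y^i(j+1)$ to the full loop-erasures, and your identification of the non-return property of $\tilde Y^B$ as the only new point is right.

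One small slip: from the definition of pre-isolated, $\kappa_{j-1}=\kappa_j-1=\kappa_{j+1}-2$ gives $\kappa_{j+1}=\kappa_j+1$, not $\kappa_j+2$. This actually makes your argument cleaner, not harder: with $\kappa_{j+1}=\kappa_j+1$ one has $i(\kappa_j+1)=j+1$ (the last visit to $C_{j+1}$) and $i(\kappa_j+2)=j+2$, so after $\tau_{\kappa_j+2}$ the index sequence stays $\ge j+2$ and the walk is genuinely outside $B(0,e^{j+1})\supset B(0,e^{j+6/7})$. Had $\kappa_{j+1}$ really equalled $\kappa_j+2$, the piece $X[\tau_{\kappa_j+2},\tau_{\kappa_j+3}]$ would go from $C_{j+1}$ to $C_{j+2}$ inside the annulus $(e^j,e^{j+2})$, and ``never again crosses $C_{j+1}$'' would not by itself prevent a dip below $e^{j+6/7}$. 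So the conclusion you want is correct, but the justification should read ``$\kappa_{j+1}=\kappa_j+1$, hence $i(k)\ge j+2$ for all $k\ge\kappa_j+2$'' rather than the version you wrote.
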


Now we can finish the proof of the proposition.
\begin{proof}[Proof of \cref{prop:non-concentration}]
Recall that as above we sample the branch by first sampling the set $\cS$ then sampling everything outside of the pre-isolated scales and finally sampling everything outside of the isolated scales. Now for each isolated scale $i$, we consider a coupling as per \cref{cor:contribution}, independently at each scale. Since it is a coupling of two copies of the same law, we can choose independently with probability $1/2$ at each scale whether to use $X^1$ or $X^2$. Let $\ve_i $ be equal to $0$ if we choose $X^1$ and $1$ if we choose $X^2$. We start by sampling the $X^i$ and we let $\cI''$ be the set of scales where the event $E$ occur. Conditionally on the choices at all scales outside of $\cI''$, by \cref{cor:contribution}, the total winding is
\[
W(Y, 0) = W_0 + 2 \pi \sum_{i \in \cI''} \ve_i ,
\]
where $W_0$ is the winding if we only choose $X^1$. This is spread-out because it contains a binomial sum independent of everything else. 
\end{proof}

\bibliographystyle{alpha}

\bibliography{local_limit_robustness}
\end{document}